\documentclass[11pt]{amsart} 

\usepackage[utf8]{inputenc}
\usepackage[T1]{fontenc}
\usepackage{amsmath}
\usepackage{amsthm}
\usepackage{amsfonts}
\usepackage{amssymb}
\usepackage{verbatim}
\usepackage{graphicx}
\usepackage{dsfont}
\usepackage{url}
\usepackage{enumerate}
\usepackage[pdftex,bookmarks=true]{hyperref}
\usepackage[normalem]{ulem}
\usepackage[usenames, dvipsnames]{color}
\usepackage{stmaryrd}

\usepackage{xcolor}                   
\definecolor{green}{HTML}{2ECC71}
\definecolor{blue}{HTML}{3498DB}
\definecolor{red}{HTML}{E74C3C}
\definecolor{orange}{HTML}{FD6A02}

\hypersetup{
	colorlinks=true,
	linkcolor=black,
	citecolor=blue,
	filecolor=magenta,      
	urlcolor=cyan,
	pdfpagemode=FullScreen,
}

\theoremstyle{plain}
\newtheorem{theorem}{Theorem}[section]

\newtheorem{prop}[theorem]{Proposition}

\newtheorem{lemma}[theorem]{Lemma}
\newtheorem{lm}[theorem]{Lemma}
\newtheorem{corollary}[theorem]{Corollary}
\newtheorem{cor}[theorem]{Corollary}

\newtheorem{thm}[theorem]{Theorem}

\theoremstyle{definition}

\newtheorem{remark}[theorem]{Remark}

\begin{document}
	
	\title[Random polynomials with dependent coefficients]{Roots of random trigonometric polynomials with general dependent coefficients}
	
	\author[J. Angst]{Jürgen Angst}
	\address[J. Angst]{IRMAR, UMR CNRS 6625, Université de Rennes, F-35000 Rennes, France}
	\email{jurgen.angst@univ-rennes.fr}
	
	\author[O. Nguyen]{Oanh Nguyen}
	\address[O. Nguyen]{Division of Applied Mathematics, Brown University, Providence, RI 02906, USA}
	\email{oanh\_nguyen1@brown.edu}
	
	\author[G. Poly]{Guillaume Poly}
	\address[G. Poly]{IRMAR, UMR CNRS 6625, Université de Rennes, F-35000 Rennes, France}
	\email{guillaume.poly@univ-rennes.fr}
	
	\thanks{Nguyen is supported by NSF grant DMS–2246575 and the Salomon grant.}
	
	\keywords{Random trigonometric polynomials, nodal set, universality}
	
	\begin{abstract}
We consider random trigonometric polynomials with general dependent coefficients. We show that under mild hypotheses on the structure of dependence, the asymptotics as the degree goes to infinity of the expected number of real zeros coincides with the independent case. To the best of our knowledge, this universality result is the first obtained in a non-Gaussian dependent context. Our proof highlights the robustness of real zeros, even in the presence of dependencies. These findings bring the behavior of random polynomials closer to real-world models, where dependencies between coefficients are common.
	\end{abstract}
	
	\maketitle


\section{Introduction and main results}

\subsection{Random trigonometric polynomials and universality}
The study of the number of real zeros of random algebraic or trigonometric polynomials is the object of a vast literature. Of particular interest is the question of the universality of the large degree asymptotics of this number of zeros, which consists of determining whether the latter asymptotic behavior depends on the specific choice of the joint distribution of the random coefficients.
\par
\medskip
The case of random trigonometric polynomials whose coefficients are independent and identically distributed has been studied intensively. In this setting, the asymptotics of the expected number of real zeros is known to display an universal behavior, at both local and global scales, as established in a series of papers, see for example \cite{MR3563891,MR3718101,MR3846831} and recently \cite{MR4340724} which provides the most general conditions. Remarkably, even if the asymptotics of the expected number of zeros is universal, its variance is not, as established in \cite{MR3980307,MR4452640}.
\par
\medskip
While the case of independent and identically distributed coefficients has been thoroughly studied, real-world phenomena often involve correlations between variables. In the context of random trigonometric polynomials, considering dependent coefficients introduces a richer and more complex setting. This raises natural questions about the asymptotic behavior of the number of real zeros and whether the universality observed in the independent case extends to the dependent setting. In particular, it becomes important to understand how the introduction of correlations affects the expected number of zeros and its variance, and whether new phenomena emerge when moving from independence to dependence. This extension not only broadens the scope of existing results but also brings the study of random polynomials closer to models encountered in practice, where dependencies between coefficients are common.
\par
\medskip
In the case where the random coefficients are correlated, the situation becomes more intricate,  and all the results obtained so far deal with the Gaussian case, namely when the random coefficients form a stationary Gaussian sequence with a given correlation function $\rho$, see \cite{MR3876743, MR4491425} and the references therein. In this case, the universality of the asymptotic of the expected number of zeros is related to fact that the associated spectral measure  admits an absolutely continuous component and in particular to the Lebesgue measure of its support. In this Gaussian setting, a key tool is the celebrated Kac--Rice formula which allows to express the expected number of zeros as an explicit integral of correlation functions. 
\par
\medskip
To our knowledge, the study of the number of zeros of random trigonometric polynomials with general dependent coefficients is completely open and in this article, we first focus on the case of random polynomials with $m-$dependent coefficients, and then illustrate how to extend our results to long-range dependency in Theorem \ref{theo.main.gauss}. We note that our method is quite robust and the results can be extended to a broader generality which we do not pursue here.

\subsection{General model and notations}\label{assumption}
We are thus interested in the nodal sets associated with random trigonometric polynomials of the form
\[
\sum_{k=1}^n a_{k,n}\cos\left(k t\right)+b_{k,n}\sin\left(kt \right), \quad t \in [0,2\pi],
\]
where the coefficients $a_{k,n}$ and $b_{k,n}$ are real, dependent random variables. To be more precise, let us consider an abstract probability space $(\Omega, \mathcal F, \mathbb P)$. A generic element in $\Omega$ will be denoted by $\omega \in \Omega$, and $\mathbb E$ will denote the expectation with respect to $\mathbb P$. We suppose that the latter probability space carries two independent copies of random arrays $(a_{k,n})_{k \geq 1, n \geq 1}$ and $(b_{k,n})_{k \geq 1, n \geq 1}$ satisfying the following conditions \par
\medskip
\begin{enumerate}
\item[{\bf (A.1)}] For all $n\geq 1, k \geq 1$, we have $\mathbb E[a_{k,n}]=0$, $\mathbb E[a_{k,n}^2]=1$ and there exists $\eta>0$ such that
\[
K_{\eta}:=\sup_{k\geq 1,n\geq 1} \mathbb E[|a_{k,n}|^{2+\eta}]<+\infty.
\]
\item[{\bf(A.2)}] For all $n \geq 1$, the sequences $(a_{k,n})_{k \geq 1}$ and $(b_{k,n})_{k \geq 1}$ are weakly stationary, with common associated covariance function $\rho_n$
\[
\rho_n(|k-\ell|):=\mathbb E[ a_{k,n} a_{\ell,n}]=\mathbb E[ b_{k,n} b_{\ell,n}], \quad \text{for} \;\;   k,\ell\geq 1.
\]
\end{enumerate}
Thanks to Bochner--Herglotz Theorem, each correlation function $\rho_n$ is then associated to a unique symmetric probability measure $\mu_{\rho_n}$ on the circle, the so-called spectral measure
\[
\rho_n(k) = \frac{1}{2\pi} \int_{-\pi}^{\pi} e^{i kx} \mu_{\rho_n}(dx).
\]
\begin{enumerate}
\item[{\bf (A.3)}] 
In all the sequel, we will assume that for each $n\geq 1$, the measure $\mu_{\rho_n}$ admits a density $\psi_{\rho_n}$ with respect to the Lebesgue measure on $[-\pi,\pi]$ and that there exists a density $\psi_{\rho}$  such that as $n$ goes to infinity
\begin{equation}\label{eq.lowerb}
\limsup_{n \to +\infty}||\psi_{\rho_n}-\psi_{\rho}||_{\infty} = 0, \quad \text{with}\quad \kappa_{\rho}:=\inf\{ \psi_{\rho}(x), x \in [-\pi, \pi]\}>0.
\end{equation}
\end{enumerate}

\subsubsection{The $m_n-$dependent case}\label{sec.mod.mdep}
First, of particular interest is the case where the correlation functions $\rho_n$ have finite support, which corresponds to \newline$m_n-$dependent sequences. Recall that a sequence $(a_k)_{k \geq 1}$ is said to be $m-$dependent if, for any given two subsets $I$ and $J$ of $\mathbb N^*$,  the families $(a_{i})_{i \in I}$ and $(a_{j})_{j \in J}$ are independent as soon as $d(I,J):=\inf\{|i-j|, i \in I, j \in J\}> m$. In this first case, the sequence of spectral densities $(\psi_{\rho_n})$ is thus a sequence of trigonometric polynomials of degrees $2m_n$, namely
\[
\psi_{\rho_n}(x) = \sum_{k=-m_n}^{m_n} \rho_{n}(|k|) e^{i kx} 
\]
and Condition (A.3) above requires this sequence to converge uniformly to a positive limit. 

\subsubsection{Functional of a Gaussian sequence}\label{sec.mod.gauss}
Secondly, we will consider the case where the random coefficients are functions of an auxiliary stationary Gaussian sequence, namely the case where we have a representation of the form $a_{k,n}=a_k=H(X_k)$ where 
\begin{itemize}
\item $(X_k)_{k \geq 1}$ is a stationary Gaussian process with its own correlation function $\rho_G$;
\item  $H$ is a measurable function admitting a finite $(2+\eta)$ moment with respect to the standard Gaussian measure.
\end{itemize}
If one assume that the covariance $\rho_G$ corresponds to a spectral density $\psi_G$ on $[-\pi, \pi]$. Then, if $H(x)=\sum_{q} c_q H_q(x)$ is the Hermite decomposition of $H$, we have indeed
\[
\psi_{\rho}(x) = \sum_q c_q^2 q! \psi_{G}^{\ast q}(x),
\]
where $\psi_{G}^{\ast q}(x)=\psi_G \ast \ldots \ast \psi_G(x)$ is the convolution of $\psi_G$ with itself $q-$times. In particular, if $\psi_G$ is lower bounded by a positive constant on a small interval, then for $q$ sufficiently large, $\psi_{G}^{\ast q}$ is uniformly lower bounded on the whole period $[-\pi, \pi]$ so that $\psi_{\rho}$ is also lower bounded by a positive constant and  Condition (A.3) is satisfied.\par
\medskip
As an illustrative example, the reader can keep in mind the case where $a_k=\text{sign}(X_k)$, where the relation between the two correlation functions $\rho$ and $\rho_G$ is explicit since
\[
\begin{array}{ll}
\rho(|k-\ell|) & =\mathbb E[ a_k a_{\ell}] = (+1)\times \mathbb P( X_k X_{\ell}>0) +(-1)\times \mathbb P( X_k X_{\ell}<0)\\
\\
& =\left( \frac{1}{2} + \frac{1}{\pi} \arcsin(\rho_G(|k-l|)) \right) - \left( \frac{1}{2} - \frac{1}{\pi} \arcsin(\rho_G(|k-l|)) \right) \\
\\
& =\frac{2}{\pi} \arcsin(\rho_G(|k-l|)).
\end{array}
\]

\subsection{Main results}

To the random arrays $(a_{k,n})_{k \geq 1, n \geq 1}$
and $(b_{k,n})_{k \geq 1, n \geq 1}$, we associate a sequence of random trigonometric polynomials setting for all $n \geq 1$
\[
f_n(t):=\sum_{k=1}^n a_{k,n}\cos\left(k t\right)+b_{k,n}\sin\left(kt \right).
\]
In the sequel, we will focus on the asymptotics of the number of zeros of $f_n$ in $[0, 2\pi]$ denoted by
\[
\mathcal N(f_n, [0,2\pi]) := \# \{ t \in [0,2\pi], \; f_n(t)=0\}.
\]
Our first main result shows that the expected number of zeros of the random trigonometric polynomials with $m_n-$dependent coefficients obeys the same asymptotics as in the independent case, provided the growth of $m_n$ is controled.
 \begin{theorem}\label{theo.main.mdep}
There exists a positive exponent $\gamma_0=\gamma_0(\eta)$ such that if $(a_{k,n})_{k \geq 1, n \geq 1}$ and $(b_{k,n})_{k \geq 1, n \geq 1}$ are two independent arrays of $m_n-$dependent  random variables satisfying Conditions (A.1)--(A.3), with $m_n = O(n^\gamma)$ and $\gamma<\gamma_0$, then the expected number of zeros of the associated random trigonometric polynomial $f_n$ satisfies the universal asymptotics
\[
\frac{\mathbb E\left[ \mathcal N(f_n, [0,2\pi])\right]}{n} \xrightarrow[n \to +\infty]{} \frac{2}{\sqrt{3}}.
\]
 \end{theorem}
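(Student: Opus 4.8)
The plan is to localise at the scale $1/n$ and reduce to a \emph{local} limit theorem: the suitably rescaled polynomial converges to a universal Gaussian process whose zero density is then computed by the Rice formula. Fix $t_0\in[0,2\pi]$ and a large $L>0$, and set $g_{n,t_0}(s):=f_n(t_0+s/n)/\sqrt{v_n(t_0)}$ with $v_n(t_0):=\mathbb E[f_n(t_0)^2]$, for $s\in[0,L]$. Expanding with (A.2) and substituting $j=k-\ell$ gives
\[
\mathbb E\!\left[f_n\!\left(t_0+\tfrac sn\right)f_n\!\left(t_0+\tfrac un\right)\right]=\sum_{|j|\le m_n}\rho_n(|j|)\sum_{\ell}\cos\!\left(jt_0+\tfrac{\ell(s-u)+js}{n}\right);
\]
since $m_n=o(n)$ the terms $js/n$ are negligible and each inner sum is a Riemann sum, so the right-hand side equals $n\,\psi_{\rho_n}(t_0)\,\mathrm{sinc}(s-u)+O(m_n^2)$ uniformly in $t_0$, where $\mathrm{sinc}(x)=\sin x/x$ and $\psi_{\rho_n}(t_0)=\sum_{|j|\le m_n}\rho_n(|j|)e^{ijt_0}\in\mathbb R$. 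Differentiating once or twice in $s$ and $u$ treats the joint law of $(g_{n,t_0},g_{n,t_0}')$ in the same way (one finds $\mathrm{Var}(g_{n,t_0}'(s))\to1/3$). By (A.3) — $\|\psi_{\rho_n}-\psi_\rho\|_\infty\to0$ and $\kappa_\rho>0$, hence $\psi_{\rho_n}(t_0)$ is bounded above and away from $0$ uniformly — the normalised covariance kernel and its derivatives converge, \emph{uniformly in $t_0$}, to those of the stationary Gaussian process $G$ on $\mathbb R$ with covariance $\mathrm{sinc}(s-u)$. This limit is independent of $t_0$ and of $\psi_\rho$, which is exactly why the answer is universal; this step already forces $m_n^2=o(n)$.

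\textbf{Step 1: weak convergence of the local process.} I would promote the convergence of moments to $g_{n,t_0}\Rightarrow G$ weakly in $C^1([0,L])$. Finite-dimensional convergence follows from a quantitative multivariate central limit theorem for $m_n$-dependent triangular arrays applied to the partial sums of the vectors $\big(a_{k,n}\cos(kt_r)+b_{k,n}\sin(kt_r)\big)_{r=1}^d$, which are $m_n$-dependent in $k$ because $(a_{k,n})_k$ and $(b_{k,n})_k$ are each $m_n$-dependent and mutually independent, have variance of order $n$ with no dominant summand, and satisfy the Lyapunov condition thanks to the uniform $(2+\eta)$-moment of (A.1). The ensuing Berry--Esseen error is of order a power of $m_n\,n^{-\eta/(2+\eta)}$, which tends to $0$ precisely when $m_n=O(n^\gamma)$ with $\gamma$ below a threshold $\gamma_0=\gamma_0(\eta)$: this is where the growth hypothesis on $m_n$ enters. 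Tightness in $C^1$ comes from uniform bounds on $\mathbb E[|g_{n,t_0}'(s)|^2]$ and on a Hölder modulus of $g_{n,t_0}'$, again via (A.1) and the trigonometric structure.

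\textbf{Step 2 (the crux): convergence of the mean number of zeros without a density.} Weak convergence alone is not enough, and this is the delicate point: since the coefficients may be discrete, $f_n(t_0+s/n)$ need not possess a density, so no Kac--Rice formula is available for $f_n$ itself, and $h\mapsto\mathcal N(h,[0,L])$ is not continuous in general (it is so only at functions with simple zeros and no zero at the endpoints). I would therefore establish (i) a uniform anti-concentration estimate $\sup_n\sup_{s\in[0,L]}\mathbb P(|g_{n,t_0}(s)|\le\varepsilon)\le C\varepsilon$ together with its two-dimensional analogue for $(g_{n,t_0}(s),g_{n,t_0}'(s))$, obtained from $\kappa_\rho>0$ and an Edgeworth/local-limit-type bound for $m_n$-dependent sums valid in the same range $\gamma<\gamma_0$; and (ii) use it to bound the second factorial moment of $\mathcal N(g_{n,t_0},[0,L])$ by an integral over $[0,L]^2$ of a two-point correlation kernel controlled by the anti-concentration, which gives uniform integrability of $\mathcal N(g_{n,t_0},[0,L])$. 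The limit $G$ being non-degenerate, with a.s.\ only simple zeros and a.s.\ no zero at $0$ or $L$, the zero-counting functional is a.s.\ continuous for its law; weak convergence together with uniform integrability then yields
\[
\mathbb E\big[\mathcal N(g_{n,t_0},[0,L])\big]\ \xrightarrow[n\to\infty]{}\ \mathbb E\big[\mathcal N(G,[0,L])\big]=\frac{L}{\pi\sqrt3},
\]
the last equality being the Rice formula for $G$ ($r=\mathrm{sinc}$, $-r''(0)=1/3$), all constants being uniform in $t_0$.

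\textbf{Step 3: globalisation.} Finally, I would split $[0,2\pi]$ into $N_L=\lfloor 2\pi n/L\rfloor$ consecutive windows of length $L/n$, apply Step 2 on each (uniformity in $t_0$ allows summing the limits), and absorb the $O(1)$ defect at each of the $N_L$ junctions into an error $O(N_L)=O(n/L)$, to obtain
\[
\limsup_{n\to\infty}\left|\frac{\mathbb E\big[\mathcal N(f_n,[0,2\pi])\big]}{n}-\frac{2}{\sqrt3}\right|\le\frac{C}{L}\qquad\text{for every }L>0,
\]
and then let $L\to\infty$. I expect Step 2 — turning weak convergence of the rescaled process into convergence of the \emph{expected} number of zeros when $f_n$ has no density, i.e.\ the anti-concentration and uniform-integrability package for $m_n$-dependent arrays — to be the technical heart of the proof, and the source of the admissible exponent $\gamma_0(\eta)$.
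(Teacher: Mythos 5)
Your overall framework -- localise at scale $1/n$, prove a functional CLT in $\mathcal C^1$ towards the $\operatorname{sinc}$ process, then upgrade to convergence of the expected zero count via anti-concentration and uniform integrability -- matches the paper's. The paper uses a random localisation point $X\sim \mathrm{Unif}[0,2\pi]$ with a fixed window $[0,2\pi]$ (giving the exact identity $\mathbb E[\mathcal N(f_n)]/n=\mathbb E\,\mathbb E_X[\mathcal N(S_n,[0,2\pi])]$), while you localise at a deterministic $t_0$ with a window of length $L/n$ and take $L\to\infty$; both localisations are reasonable, and your covariance computation is correct. The genuine gap is in your Step 2, which you rightly call the crux.

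First, the proposed anti-concentration $\sup_{n}\sup_{s}\mathbb P(|g_{n,t_0}(s)|\le\varepsilon)\le C\varepsilon$ at a \emph{deterministic} $t_0$ is false for general (in particular discrete) coefficients. At $t_0=0$ with Rademacher coefficients, $f_n(0)=\sum_k a_k$ is a lattice variable with an atom at $0$ of mass $\asymp n^{-1/2}$, so for every $n$ the left-hand side stays $\gtrsim n^{-1/2}$ as $\varepsilon\downarrow 0$; no bound of the form $C\varepsilon$ can hold. More structurally, the ``Edgeworth/local-limit-type bound'' you invoke is exactly the kind of tool the paper explains is unavailable here: local limit theorems and Esseen/Hal\'asz-type estimates presuppose control of the characteristic function (or the existence of a density), which fails for dependent, possibly discrete coefficients. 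This is not a minor technicality but the central obstacle of the problem.

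Second, even if one had a bound of the form $\mathbb P(|g_{n,t_0}(s)|\le\varepsilon)\le C\varepsilon + O(n^{-\alpha})$ (which is what the Berry--Esseen rate actually delivers, and what the paper's Corollary~\ref{cor.rate} gives), your second-factorial-moment route cannot close the argument. On the one hand, bounding $\mathbb E[\mathcal N(\mathcal N-1)]$ by an integral of a two-point kernel requires a joint density for $(g(s),g(u))$, which need not exist. On the other hand, and more fundamentally, uniform integrability of $\mathcal N$ demands control of $\mathbb P(\mathcal N>s)$ for $s$ up to order $n$; after the Rolle-iteration reduction this becomes a small-ball estimate for $\|S_n\|_\infty$ at thresholds of size $1/\lfloor s\rfloor!$, i.e.\ \emph{super-exponentially} small. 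A CLT-type error of polynomial order $n^{-\alpha}$ is useless at that scale -- integrating $s^\epsilon\,n^{-\alpha}$ over $s\in[0,2n]$ diverges. The paper resolves this with a deterministic, non-probabilistic device: Tur\'an's Lemma for trigonometric polynomials, combined with $m_n$-dependence to lower the effective degree and a combinatorial argument on ``feasible tuples.'' This yields, for at least one point $t$ in each window, a bound of the form $\mathbb P(|S_n(t)|\le \delta_n)\lesssim e^{-c n^{\kappa}}$ even for $\delta_n$ super-exponentially small (Theorems~\ref{theo.anti}, \ref{thm:anti}, Corollaries~\ref{cor.anti}, \ref{cor.anti:gen}); this is then split against the CLT rate in two ranges of $s$ in Section~\ref{sec.ui}. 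Your proposal is missing this idea entirely, and the substitute you offer cannot work in the generality of Conditions (A.1)--(A.3).
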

 
 \begin{remark}
The proof below shows that, if the random coefficients have a finite $(2+\eta)$ moment,  the exponent $\gamma_0$ can be chosen as $\gamma_0(\eta)=\frac{\eta}{2(5+7\eta)}$ but the latter threshold is not meant to be sharp. The statement of Theorem \ref{theo.clt} below suggests that the optimal value should be closer to $\frac{\eta}{2(1+\eta)}$. In particular, if the coefficients have uniformly bounded moments of all orders, letting $\eta$ go to infinity, the threshold should be closer to $1/2$.
 \end{remark}
 
Since the threshold of dependence $m_n$ is allowed to go to infinity with $n$, via some approximation arguments, it is possible to deduce from Theorem \ref{theo.main.mdep} analogue universality statements for sequences of coefficients with an arbitrarily long range of dependency. 
For example, the next theorem establishes the universality of the expected number of zeros in the case where the random coefficients are functionals of a weakly stationary Gaussian sequence with fast decorrelation.

\begin{theorem}\label{theo.main.gauss}
Let us consider $(a_k)_{k \geq 1}$ and $(b_k)_{k \geq 1}$ two independent copies of a process of the form $a_k=H(X_k)$ where $(X_k)$ is a stationary Gaussian process with correlation function $\rho_G \in \ell^1(\mathbb N)$. We suppose that the associated spectral density $\psi_G$ is $\mathcal C^{\infty}$ and positive on the whole period and that the function $H$ has a finite $(2+\eta)$ moment with respect to the standard Gaussian measure. Then, asymptotically, the expected number of zeros of the associated random trigonometric polynomial $f_n$ is universal 
\[
\frac{\mathbb E\left[ \mathcal N(f_n, [0,2\pi])\right]}{n} \xrightarrow[n \to +\infty]{} \frac{2}{\sqrt{3}}.
\]
 \end{theorem}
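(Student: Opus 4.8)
The plan is to deduce Theorem~\ref{theo.main.gauss} from Theorem~\ref{theo.main.mdep} by replacing the long-range dependent coefficients $a_k=H(X_k)$ with a well-coupled $m_n$-dependent truncation, and then transferring the asymptotics of the number of zeros by a robustness argument.

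\emph{Construction of the $m_n$-dependent model.} Since $\psi_G$ is $\mathcal C^{\infty}$ and strictly positive on $[-\pi,\pi]$, the function $\sqrt{\psi_G}$ is again $\mathcal C^{\infty}$, so its Fourier coefficients $(\beta_j)_{j\in\mathbb Z}$ decay faster than any power of $|j|$ and $(X_k)$ admits the two-sided moving-average representation $X_k=\sum_{j\in\mathbb Z}\beta_j\,\xi_{k-j}$, where $(\xi_j)$ is i.i.d.\ Gaussian, normalised so that $\mathrm{Var}(X_k)=1$. For $m\ge1$ set $X_k^{(m)}:=\sum_{|j|\le m}\beta_j\,\xi_{k-j}$, $\sigma_m^2:=\mathrm{Var}(X_k^{(m)})=\sum_{|j|\le m}\beta_j^2\to1$, and $a_k^{(m)}:=H(X_k^{(m)}/\sigma_m)$, and let $b_k^{(m)}$ be defined in the same way from the independent copy of $(\xi_j)$ underlying $(b_k)$. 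Then $(a_k^{(m)})_k$ and $(b_k^{(m)})_k$ are $2m$-dependent, stationary, centred, of unit variance, with $(2+\eta)$-moments equal to those of $H$, so \textbf{(A.1)}--\textbf{(A.2)} hold. Moreover the covariance function of $X_k^{(m)}/\sigma_m$ is finitely supported and converges in $\ell^1$ to $\rho_G$, so its spectral density $\psi_{G,m}$ is a nonnegative trigonometric polynomial converging uniformly to $\psi_G$; hence the spectral density of $(a_k^{(m)})_k$, namely $\sum_q c_q^2\,q!\,\psi_{G,m}^{\ast q}$, converges uniformly to $\psi_\rho=\sum_q c_q^2\,q!\,\psi_G^{\ast q}$, which is bounded below by a positive constant (since $\psi_G>0$ on the whole period, every $\psi_G^{\ast q}$ with $q\ge1$ is bounded below by a positive constant, and $c_q\neq0$ for some $q\ge1$ because $H$ is non-constant; cf.\ \S\ref{sec.mod.gauss}). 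Thus \textbf{(A.3)} holds. Fixing any sequence $m_n\to\infty$ with $2m_n=O(n^{\gamma})$ for some $\gamma<\gamma_0$, for instance $m_n=\lfloor\log n\rfloor$, Theorem~\ref{theo.main.mdep} applies to
\[
g_n(t):=\sum_{k=1}^{n}a_k^{(m_n)}\cos(kt)+b_k^{(m_n)}\sin(kt),
\]
and yields $\mathbb E[\mathcal N(g_n,[0,2\pi])]/n\to 2/\sqrt3$.

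\emph{Comparison of the two zero counts.} It then remains to prove $\mathbb E|\mathcal N(f_n,[0,2\pi])-\mathcal N(g_n,[0,2\pi])|=o(n)$, keeping $f_n$ and $g_n$ on the common probability space. The quantitative input is that $\delta_m:=\mathbb E[(a_k-a_k^{(m)})^2]$, which does not depend on $k$ by stationarity, tends to $0$ as $m\to\infty$: indeed $X_k^{(m)}/\sigma_m\to X_k$ in $L^2(\Omega)$ while keeping the standard Gaussian law, so $a_k^{(m)}\to a_k$ in probability (the law of $X_k$ being absolutely continuous, $H$ is a.s.\ continuous at $X_k$), and the uniform $(2+\eta)$-moment bound upgrades this to convergence in $L^2$. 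Partition $[0,2\pi]$ into $M_n$ equal arcs with $M_n/n\to\infty$ and $M_n=O(n)$, and let $t_1,\dots,t_{M_n}$ be their endpoints. Orthogonality of $\{\cos(k\cdot),\sin(k\cdot)\}_{1\le k\le n}$ on this grid gives
\[
\sum_{i=1}^{M_n}\mathbb E[(f_n-g_n)(t_i)^2]=\frac{M_n}{2}\sum_{k=1}^{n}\left(\mathbb E[(a_k-a_k^{(m_n)})^2]+\mathbb E[(b_k-b_k^{(m_n)})^2]\right)=M_n\,n\,\delta_{m_n}.
\]
Together with the local anticoncentration bound $\limsup_{n\to\infty}\sup_t\mathbb P(|f_n(t)|<\varepsilon\sqrt n)\to0$ as $\varepsilon\to0$ — which follows from a central limit theorem for $f_n(t)$ established exactly as in the $m_n$-dependent case, the variance of $f_n(t)$ satisfying $\mathrm{Var}(f_n(t))/n\to\psi_\rho(t)\ge\kappa_\rho>0$ — and choosing $\varepsilon_n\to0$ with $\varepsilon_n^2/\delta_{m_n}\to\infty$ (say $\varepsilon_n=\delta_{m_n}^{1/4}$), a union bound yields $\mathbb E[\#\{i:|(f_n-g_n)(t_i)|\ge|f_n(t_i)|\}]=o(n)$, and likewise with $f_n$ and $g_n$ exchanged. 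Since a sign change of $f_n$ across two consecutive grid points both satisfying $|f_n|>|f_n-g_n|$ is also a sign change of $g_n$, the numbers $\mathcal S_{M_n}(f_n)$ and $\mathcal S_{M_n}(g_n)$ of grid sign changes satisfy $\mathbb E|\mathcal S_{M_n}(f_n)-\mathcal S_{M_n}(g_n)|=o(n)$. Finally $\mathcal S_{M_n}(\cdot)\le\mathcal N(\cdot,[0,2\pi])$ always, while the absence of clustering of zeros at scale $1/M_n$ gives $\mathbb E[\mathcal N(f_n,[0,2\pi])-\mathcal S_{M_n}(f_n)]=o(n)$ and the same for $g_n$; sandwiching $\mathbb E[\mathcal N(f_n,[0,2\pi])]$ between $\mathbb E[\mathcal S_{M_n}(f_n)]$ and $\mathbb E[\mathcal S_{M_n}(f_n)]+o(n)$ and using the corresponding statements for $g_n$ yields $\mathbb E[\mathcal N(f_n,[0,2\pi])]=\mathbb E[\mathcal N(g_n,[0,2\pi])]+o(n)$, hence the theorem.

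\emph{Main obstacle.} I expect the delicate point to be the no-clustering estimate $\mathbb E[\mathcal N(f_n,[0,2\pi])-\mathcal S_{M_n}(f_n)]=o(n)$ for the long-range process $f_n$ — equivalently, a uniform bound on the second factorial moment of the number of zeros of $f_n$ in arcs of length $\asymp 1/M_n$. This is the one point where the estimates underlying Theorem~\ref{theo.main.mdep} must be re-run rather than invoked as a black box; they should go through because they rely only on bounds for the second moments of $f_n,f_n',f_n''$ and on local anticoncentration, all of which depend solely on \textbf{(A.1)}--\textbf{(A.3)} — in particular on $\psi_\rho$ being bounded above and below, which here follows from $\rho_G\in\ell^1(\mathbb N)$ and from $\psi_G$ being smooth and positive — and not on the dependence range being finite. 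By contrast, the truncation of the first step and the convergence $\delta_{m_n}\to0$ are routine and require no quantitative link between the growth of $m_n$ and the regularity of $H$, so any $m_n\to\infty$ with $2m_n=O(n^{\gamma})$, $\gamma<\gamma_0$, is admissible.
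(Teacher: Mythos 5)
Your proof takes a genuinely different route from the paper's, and the first half is a nice alternative, but the second half has a real gap that the paper's argument was specifically designed to avoid.

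\emph{On the truncation step.} You truncate the moving-average representation $X_k=\sum_j\beta_j\xi_{k-j}$ (with $\beta_j$ the Fourier coefficients of $\sqrt{\psi_G}$), whereas the paper truncates the covariance $\rho_G$ directly and then checks that the truncated sequence is still positive-definite by controlling $\|\psi_{G,n}-\psi_G\|_\infty$. Your variant is arguably cleaner because the truncated process is automatically a valid Gaussian process, and it yields a genuine coupling on a single probability space. However, the assertion that ``the law of $X_k$ being absolutely continuous, $H$ is a.s.\ continuous at $X_k$'' is false for a general measurable $H$ (Lebesgue density gives only approximate continuity a.e.). The convergence $\delta_m\to0$ is nevertheless correct, but should be proved via the Mehler/Hermite expansion: $(X_k^{(m)}/\sigma_m, X_k)$ is a standard bivariate Gaussian with correlation $r_m=\sigma_m\to1$, so $\delta_m=2\sum_q c_q^2\,q!\,(1-r_m^q)\to0$ by dominated convergence since $\sum_q c_q^2\,q!<\infty$. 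Also note that ``$M_n/n\to\infty$ and $M_n=O(n)$'' are contradictory; presumably you mean $M_n/n\to\infty$ slowly.

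\emph{The main gap.} Your comparison strategy reduces the theorem to two estimates: (i) the grid sign-change counts of $f_n$ and $g_n$ are close in $L^1$, and (ii) the number of zeros of $f_n$ (and of $g_n$) that are missed by the grid is $o(n)$ in expectation — the ``no-clustering'' estimate you correctly flag as the main obstacle. But your justification that (ii) ``should go through because [the estimates] rely only on bounds for the second moments of $f_n,f_n',f_n''$ and on local anticoncentration, all of which depend solely on (A.1)--(A.3) and not on the dependence range being finite'' is not accurate. The uniform integrability of $\mathcal N(S_n,[0,2\pi])$ (Proposition~\ref{pro.ui}) rests on the small-ball bound for $\|S_n\|_\infty$ of Theorem~\ref{thm:anti} and Corollary~\ref{cor.anti:gen}, and the proof of Theorem~\ref{thm:anti} uses the finite dependence range in two essential ways: the degree reduction from $O(n)$ to $O(N+m)$ relies on the independence of $\mathcal F_1^N$ from $\mathcal G=\mathcal F_{N+m+1}^n\vee\sigma(b_{k,n})$, and the ``bad configurations'' step uses the mutual independence of $a_{(m+1)\ell,n}-a_{(m+1)\ell,n}'$ across $\ell$. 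Neither of these is available for the long-range dependent coefficients $a_k=H(X_k)$. Without a workable substitute, your argument establishes the convergence of $\mathbb E[\mathcal N(g_n,[0,2\pi])]/n$ but not the transfer to $\mathcal N(f_n,[0,2\pi])$.

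\emph{What the paper does instead.} Rather than coupling $f_n$ and $g_n$ in $L^2$, the paper compares the underlying Gaussian vectors $(X_1,\dots,X_n)$ and $(\widetilde X_{1,n},\dots,\widetilde X_{n,n})$ in total variation (Theorem~1.1 of \cite{devroye2023total} plus a Szegő bound on $\operatorname{sp}(\Sigma_G^{-1})$), obtaining $\mathrm d_{TV}=O(n^{1-\gamma(D-1/2)})$, which is $o(1)$ for $D$ large since $\rho_G$ decays superpolynomially. Because $S_n$ and $\widetilde S_n$ are measurable functions of these vectors, this single bound transfers simultaneously the one-dimensional Kolmogorov rate (used for $I_n$), the small-ball estimate for $\|\cdot\|_\infty$ (used for $J_n$), and the finite-dimensional convergence — with no dependence on $H$ beyond measurability, and no need for a rate on your $\delta_m$ or any direct anti-concentration result for the long-range model. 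This is the crucial device your proposal is missing, and it is precisely what makes the deduction of Theorem~\ref{theo.main.gauss} from Theorem~\ref{theo.main.mdep} go through.
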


\begin{remark}
A representative example of a Gaussian process satisfying the hypotheses of the above Theorem \ref{theo.main.gauss} is the discrete version of Bargman--Fock process, associated with the covariance function $\rho_G(k)=e^{-k^2/2}$. In that case, we have indeed 
\[
e^{-k^2/2} =\hat{f}_G(k)=\int_{\mathbb R} e^{- i kx } f_G(x)dx, \quad \text{with} \quad f_G(x):=\frac{e^{-x^2/2}}{\sqrt{2\pi}},
\]
so that by Poisson summation formula, the spectral density is given by 
\[
\psi_G(x)=\sum_{k \in \mathbb Z} \hat{f}_G(k) e^{i k x} = 2\pi \sum_{k \in \mathbb Z}f_G(x+2\pi k) = \sqrt{2\pi} \sum_{k \in \mathbb Z} e^{-\frac{1}{2}(x+2\pi k)^2}.
\]
In particular, we have 
\[
\kappa_G:=\inf_{x \in [-\pi,\pi]} \psi_G(x)\geq  \sqrt{2\pi} e^{-\frac{\pi^2}{2}}.
\]
In the same manner, for $\rho_G(k)=e^{-|k|}$, we have 
\[
e^{-|k|} =\hat{g}_G(k)=\int_{\mathbb R} e^{- i kx } g_G(x)dx, \quad \text{with} \quad g_G(x):=\frac{1}{\pi(1+x^2)},
\]
so that this time the spectral density is given by 
\[
\psi_G(x)=\sum_{k \in \mathbb Z} \hat{g}_G(k) e^{i k x} = 2\pi \sum_{k \in \mathbb Z}g_G(x+2\pi k) = \sum_{k \in \mathbb Z} \frac{2}{1+(x+2k\pi)^2}>\frac{2}{1+\pi^2}.
\]
\end{remark}

\begin{remark}
As already mentioned above, a prototypical choice of function $H$ in Theorem \ref{theo.main.gauss} would be $H(X_k)=\text{sign}(X_k)$. This simple choice yields a natural model for random Littlewood polynomials with dependent coefficients, for which our universality statement applies.
\end{remark}

\begin{remark}
The proof of Theorem \ref{theo.main.gauss} actually shows that its conclusion holds as soon as we have $\psi_G$ is of class $\mathcal C^{D_0}$ for $D_0=D_0(\eta)$ sufficiently large, which can be expressed as a function of the threshold $\gamma_0=\gamma_0(\eta)$, namely $D_0(\eta)=57/2+20/\eta$. As above, the index of regularity $D_0$ is not meant to be sharp. 
\end{remark}

 \subsection{Global strategy of the proof} 
At a high level, the global strategy behind the proof of Theorem \ref{theo.main.mdep} is similar to the one adopted in the references in \cite{MR4350983,MR4491425}. It consists in showing that, when properly localized in normalized, the random field $(f_n(t))$ admits a universal scaling limit in distribution, and that the convergence holds in the $\mathcal C^1$ topology, which ensure that the nodal set associated with $f_n$ converges in distribution towards the nodal set of the limit. To conclude that the expected number of zeros then converges to a universal limit, one is left to establish anti-concentration estimates for the field which guaranty that the number of zeros is uniformly integrable. 
\par
\medskip
In comparison with other works on random trigonometric polynomials with dependent coefficients, one of the main difficulties is that we are not in a Gaussian context here, hence the use of the celebrated Kac--Rice formula is prohibited, the expected number of zeros can not be written as a deterministic integral involving the correlations functions of the field and its derivatives. In the same way, since we are not in a Gaussian context, the law of the field is not characterized by the covariance function. 
\par
\medskip
Another major difficulty comes from the fact that we are dealing with general coefficients, possibly discrete, for which anti-concentration estimates are notoriously difficult to establish and are known to be weaker than for their Gaussian or continuous analogues. Due to the non-independent, non-Gaussian framework, the characteristic function $S_n$ is here out of reach and in particular, we cannot use the standard tools to establish small-ball estimates such as Hal\'asz method, Esseen inequality, or the recently developed inverse Littlewood-Offord theory (we refer to the excellent lecture notes by Krishnapur \cite{krishnapur2016anti} for an overview of these methods). Nevertheless, we come up with an elegant solution that incorporate the classical analytic tool of Tur\'an's lemma and combinatorial arguments, the latter being motivated by those of Erd\H{o}s \cite{erdos1945lemma}.
\par
\medskip
Let us give a few more details on each step of the proof and on the plan of the rest of the paper. Given the sequence of random coefficients  $(a_{k,n},b_{k,n})_{k \geq 1, n \geq 1}$ defined on the probability space $(\Omega, \mathcal F,\mathbb P)$, we consider an independent random variable $X$ with uniform distribution $\mathbb P_X=\frac{dx}{2\pi}$ in $[0,2\pi]$. This can be achieved by considering the product space 
\[
(\Omega \times [0,2\pi], \mathcal F \times \mathcal B([0,2\pi]), \mathbb P \otimes \mathbb P_X),
\]
with $X$ seen as the identity map from $([0,2\pi],\mathcal B([0,2\pi])$ to itself. 
The expectation with respect to $\mathbb P_X$ will be denoted by $\mathbb E_X$ in the sequel.
The rescaled version of the initial field $(f_n(t))$ that we will consider in the sequel is defined as
\[
S_n(t):=\frac{1}{\sqrt{n}}\sum_{k=1}^n a_{k,n}\cos\left(k X+\frac{kt}{n}\right)+b_{k,n}\sin\left(k X+\frac{kt}{n}\right)=\frac{1}{\sqrt{n}} f_n\left(X+\frac{t}{n}\right).
\]
In other words, $S_n$ is a normalized, zoomed-in version of $f_n$ in a window of size $2\pi/n$ based at the random point $X$.
Following Lemma 3 of \cite{MR4350983}, the number of zeros $\mathcal N(f_n, [0,2\pi])$ of $f_n$ can then be rewritten as
\[
\frac{\mathcal N(f_n, [0,2\pi])}{n} = \mathbb E_X \left[  \mathcal N\left(f_n, \left[X,X+\frac{2\pi}{n}\right]\right)\right] = \mathbb E_X \left[  \mathcal N\left(S_n, \left[0,2\pi\right]\right)\right],
\]
and we have thus the  following representation of the expected number of zeros 
\begin{equation}\label{eq.repzero}
\frac{\mathbb E[\mathcal N(f_n, [0,2\pi])]}{n} = \mathbb E \left[\mathbb E_X \left[  \mathcal N\left(S_n, \left[0,2\pi\right]\right)\right] \right].
\end{equation}
As a result, the conclusion of Theorem \ref{theo.main.mdep} above is equivalent to the fact that the sequence $\mathbb E \left[\mathbb E_X \left[  \mathcal N\left(S_n, \left[0,2\pi\right]\right)\right] \right]$ converges to $2/\sqrt{3}$ as $n$ goes to infinity. 
\par
\medskip
The first step of the proof, which is the object of the next Section \ref{sec.clt}, consists in showing that, as $n$ goes to infinity, under the product measure $\mathbb P \otimes \mathbb P_X$, the process $(S_n(t))_{t \in [0,2\pi]}$ converges in distribution, in the $\mathcal C^1$ topology towards an explicit smooth limit process. The latter being non-degenerate, this entails that the nodal set associated with $(S_n)$ converges in distribution (as a point process) to the one of the limit. In particular, as $n$ goes to infinity, the sequence of random variables $\mathcal N\left(S_n, \left[0,2\pi\right]\right)$ converge is distribution. 
\par
\medskip
In Section \ref{sec.anticon}, we establish a crucial anti-concentration estimate for $||S_{n}||_{\infty}$, the sup norm of the field. This estimate is derived from Tur\'an's Lemma for trigonometric polynomials associated with combinatorial  considerations. 
\par
\medskip
In Section \ref{sec.ui}, we combine the results of both Sections \ref{sec.clt} and \ref{sec.anticon} to establish that the sequence $(\mathcal N\left(S_n, \left[0,2\pi\right]\right))_{n\geq 1}$ is uniformly integrable under $\mathbb P \otimes \mathbb P_X$, or more precisely that is bounded in $L^{1+\epsilon}(\mathbb P \otimes \mathbb P_X)$ for some small $\epsilon>0$. This will complete the proof of Theorem \ref{theo.main.mdep}.
\par
\medskip
Finally, in the last Section \ref{sec.gauss}, using total variations bounds between Gaussian vectors, we approximate the polynomial associated with the sequence $a_k=H(X_k)$ by its analogue where the covariance function of the Gaussian sequence $(X_k)$ is truncated at a threshold $m_n$. Doing so, we deduce Theorem \ref{theo.main.gauss} from Theorem \ref{theo.main.mdep} with the fast decay of the covariance function $\rho_G$ compensating the small growth of $m_n$ as $n$ goes to infinity.

\section{A Central Limit Theorem}\label{sec.clt}

The main objective of this section is to establish the following functional Central Limit Theorem and its corollary on the convergence of nodal sets. 
\begin{theorem}\label{theo.clt}
Let us consider $(a_{k,n})_{k \geq 1, n \geq 1}$ and $(b_{k,n})_{k \geq 1, n \geq 1}$ two independent arrays of $m_n-$dependent random variables satisfying Conditions (A.1)--(A.3), where $m_n = O(n^{\gamma})$ as $n$ goes to infinity, with $\gamma<\frac{\eta}{2(1+\eta)}$.
Then under the probability measure $\mathbb P \otimes \mathbb P_X$, the random process $(S_n(t))_{t \in [0,2\pi]}$ converges in distribution in the functional space $\mathcal{C}^1([0,2\pi])$ towards the process $(S_{\infty}(t))_{t \in [0,2\pi]}=\sqrt{2\pi\psi_\rho(X)}\times(Z_t)_{t\in\mathbb{R}}$ where $(Z_t)_{t\in\mathbb{R}}$ denotes a real centered, stationary Gaussian process whose covariance function is $\sin_c(x)=\frac{\sin(x)}{x}$ and which is independent of the random variable $X$.
\end{theorem}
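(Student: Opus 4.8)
The plan is to establish this functional CLT in the classical way: separately prove \textbf{(i)} tightness of $(S_n)_n$ in $\mathcal{C}^1([0,2\pi])$ and \textbf{(ii)} convergence of all finite-dimensional distributions of $\bigl(S_n(t),S_n'(t)\bigr)$ to those of $\bigl(S_\infty(t),S_\infty'(t)\bigr)$. Since the law of a $\mathcal{C}^1$-process is determined by the joint laws of its values and derivatives at finitely many points, these two facts together yield the announced convergence in distribution in $\mathcal{C}^1([0,2\pi])$. Note that the limit process is a.s.\ of class $\mathcal{C}^1$ (indeed $\mathcal{C}^\infty$), since the covariance $\sin(x)/x$ is the restriction of an entire function.

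For tightness, the key observation is that averaging over the uniform phase $X$ over a full period annihilates all oscillatory cross terms: for any integer $i\ge 0$ and any $t$, using $(\mathbf{A.1})$--$(\mathbf{A.2})$ together with the identity $\mathbb{E}_X\bigl[\cos\bigl((k-\ell)(X+t/n)\bigr)\bigr]=\mathbf{1}_{k=\ell}$,
\[
\mathbb{E}\bigl[\bigl(S_n^{(i)}(t)\bigr)^2\bigr]=\frac{1}{n}\sum_{k=1}^n\Bigl(\frac{k}{n}\Bigr)^{2i}\rho_n(0)\ \le\ 1 .
\]
Hence $\sup_n\mathbb{E}\bigl[\lVert S_n\rVert_{H^3([0,2\pi])}^2\bigr]<\infty$, and since the embedding $H^3([0,2\pi])\hookrightarrow\mathcal{C}^1([0,2\pi])$ is compact (Rellich--Kondrachov combined with $H^2([0,2\pi])\hookrightarrow\mathcal{C}^1([0,2\pi])$), Markov's inequality immediately gives tightness of $(S_n)_n$ in $\mathcal{C}^1([0,2\pi])$.

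For the finite-dimensional distributions, fix $t_1,\dots,t_d$ and real scalars $\lambda_j,\mu_j$, and condition on $X=x$. Then $\sum_j\bigl(\lambda_j S_n(t_j)+\mu_j S_n'(t_j)\bigr)$ equals $\tfrac{1}{\sqrt n}\sum_{k=1}^n\bigl(a_{k,n}A_{k,n}(x)+b_{k,n}B_{k,n}(x)\bigr)$ with $A_{k,n},B_{k,n}$ deterministic and bounded by a constant depending only on the $\lambda_j,\mu_j$; this is a normalized sum of a centered $m_n$-dependent triangular array with uniformly bounded $(2+\eta)$-moments (by $(\mathbf{A.1})$). I would invoke an $m_n$-dependent CLT via Bernstein's big-block/small-block decomposition: split $\{1,\dots,n\}$ into alternating big blocks of length $p_n$ and small blocks of length $q_n>m_n$, so that the big-block partial sums are genuinely independent; the small-block contribution is negligible in $L^2$ (using only $|\rho_n|\le1$ on the support $\{|j|\le m_n\}$), while the big-block sums satisfy Lyapunov's condition. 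A direct bookkeeping shows that an admissible choice of $(p_n,q_n)$ exists precisely when $m_n=O(n^\gamma)$ with $\gamma<\tfrac{\eta}{2(1+\eta)}$, the binding constraint coming from a Lyapunov term of order $n^{-\eta/2}m_n^{1+\eta}$. In parallel, using the Bochner--Herglotz identity $\psi_{\rho_n}(x)=\tfrac1{2\pi}\sum_{|j|\le m_n}\rho_n(|j|)e^{ijx}$, the uniform convergence $\psi_{\rho_n}\to\psi_\rho$ from $(\mathbf{A.3})$, and Riemann-sum approximations of $\tfrac1n\sum_\ell$, one computes that the conditional variance of this linear combination converges to $2\pi\psi_\rho(x)$ times the variance of $\sum_j\bigl(\lambda_jZ_{t_j}+\mu_jZ_{t_j}'\bigr)$ (the prototype being $\mathrm{Cov}(S_n(s),S_n(t)\mid X=x)\to 2\pi\psi_\rho(x)\,\tfrac{\sin(s-t)}{s-t}$), a quantity bounded below by $2\pi\kappa_\rho$ times the corresponding non-degenerate Gaussian variance. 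Thus, for every $x$, conditionally on $X=x$ the vector $\bigl(S_n(t_j),S_n'(t_j)\bigr)_j$ converges in law to $\sqrt{2\pi\psi_\rho(x)}$ times the Gaussian vector $\bigl(Z_{t_j},Z_{t_j}'\bigr)_j$.

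Finally, since conditional characteristic functions are bounded by $1$ and the pointwise-in-$x$ convergence just obtained holds for every $x$, dominated convergence gives
\[
\mathbb{E}\Bigl[e^{\,i\sum_j(\lambda_jS_n(t_j)+\mu_jS_n'(t_j))}\Bigr]=\mathbb{E}_X\Bigl[\mathbb{E}\bigl[e^{\,i\sum_j(\lambda_jS_n(t_j)+\mu_jS_n'(t_j))}\bigm|X\bigr]\Bigr]\ \xrightarrow[n\to\infty]{}\ \mathbb{E}_X\Bigl[e^{-\pi\psi_\rho(X)\,\mathrm{Var}\left(\sum_j\lambda_jZ_{t_j}+\mu_jZ_{t_j}'\right)}\Bigr],
\]
which is exactly the characteristic function of $\bigl(S_\infty(t_j),S_\infty'(t_j)\bigr)_j$ with $S_\infty=\sqrt{2\pi\psi_\rho(X)}\,Z$ and $Z$ independent of $X$. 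Together with the tightness above this finishes the proof. The main obstacle is the $m_n$-dependent central limit theorem itself: with only $(2+\eta)$ moments and $m_n$ allowed to diverge, the block lengths must be tuned delicately, and it is exactly this step that forces the restriction $\gamma<\tfrac{\eta}{2(1+\eta)}$.
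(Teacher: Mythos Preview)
Your argument is correct and follows the same two-step architecture as the paper (tightness in $\mathcal{C}^1$, then convergence of finite-dimensional marginals conditionally on $X$, followed by dominated convergence over~$X$), but each step is implemented differently, and the comparison is instructive. For tightness, the paper uses a Kolmogorov--Lamperti increment bound $\mathbb{E}_X\mathbb{E}|S_n(t)-S_n(s)|^2\le|t-s|^2$ (and the same for $S_n'$), whereas your Sobolev route---bounding $\sup_n\mathbb{E}\|S_n\|_{H^3}^2$ and invoking the compact embedding $H^3\hookrightarrow\mathcal{C}^1$---is cleaner and avoids any moment condition beyond second moments. For the marginals, the paper cites Janson's Lindeberg-type CLT for $m_n$-dependent arrays directly, obtaining the constraint $m_n^{1+\eta}/n^{\eta/2}\to0$ from the Lindeberg remainder; your Bernstein big-block/small-block scheme with a Lyapunov check is the classical hands-on alternative and produces exactly the same threshold $\gamma<\eta/(2(1+\eta))$, so nothing is lost or gained there. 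For the limiting variance, the paper appeals to kernel lemmas from \cite{MR4491425} (writing $\sigma_n^2$ as a convolution with an approximate identity), while your Riemann-sum/Bochner--Herglotz computation is more elementary and works because under (A.3) the limit density $\psi_\rho$ is continuous; just be careful with the normalization (in the paper's convention $\psi_{\rho_n}(x)=\sum_j\rho_n(|j|)e^{ijx}$ without the $\tfrac{1}{2\pi}$, which is where the factor $2\pi$ in $S_\infty$ comes from). Finally, you include the derivatives $S_n'(t_j)$ in the finite-dimensional step; the paper does not, relying implicitly on the fact that on a separable space like $\mathcal{C}^1([0,2\pi])$ the law is already determined by the point evaluations $f\mapsto f(t)$ (since $f'(t)$ is a pointwise limit of difference quotients). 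Your version is slightly more explicit, the paper's slightly more economical.
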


\begin{remark}
Informally speaking, Theorem \ref{theo.clt} asserts that the field \newline $(S_n(t))_{t \in [0,2\pi]}$ converges in distribution towards a universal $\sin_c$ Gaussian process, multiplied by a random amplitude, which depends on the random locus $X$, via the spectral density $\psi_{\rho}$. The universality of the limit in Theorem \ref{theo.main.mdep} reflects the universality of this $\sin_c$ process and the positiveness of the spectral density.
\end{remark}

As classically done when establishing a functional convergence, the proof of the last Theorem \ref{theo.clt} articulates in two distinct steps: in the next Section \ref{sec.tight}, we establish the tightness of our process in the space $\mathcal{C}^1([0,2\pi])$ and then, in the following Section \ref{sec.clt.mdep}, we establish the convergence in distribution of the finite dimensional marginals. In the last Section \ref{sec.rate}, we moreover exhibit a small rate of convergence of the one-dimensional marginals which will be needed to establish the universality of the asymptotics of the expected number of zeros.
\par
\medskip
\noindent
Since the above functional convergence is valid in the $\mathcal C^1-$topology, one deduces the convergence in distribution of the nodal sets associated with $(S_n)$. In particular, we have the following corollary.
\begin{corollary}\label{cor.conv.nodal}
Under the hypotheses of Theorem \ref{theo.clt} and under the probability measure $\mathbb P \otimes \mathbb P_X$, the number of zeros $\mathcal N(S_n, [0,2\pi])$ converges in distribution to the number of zeros of the limit process $\mathcal N(S_{\infty}, [0,2\pi])$.
\end{corollary}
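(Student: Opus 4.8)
The plan is to deduce Corollary~\ref{cor.conv.nodal} from Theorem~\ref{theo.clt} by the continuous mapping theorem, the real content being to check that the limiting process $S_\infty$ almost surely belongs to the set of $\mathcal C^1$ functions at which the zero–counting functional $g\mapsto\mathcal N(g,[0,2\pi])$ is continuous. First I would record the regularity and non-degeneracy of the limit: since $\sin_c$ is real-analytic, the stationary Gaussian process $(Z_t)$ admits an a.s.\ $\mathcal C^\infty$ (indeed real-analytic) modification; as $S_\infty(t)=\sqrt{2\pi\psi_\rho(X)}\,Z_t$ with an amplitude that is $X$-measurable, a.s.\ finite, and bounded below by $\sqrt{2\pi\kappa_\rho}>0$ by \eqref{eq.lowerb}, the field $S_\infty$ is a.s.\ $\mathcal C^\infty$, not identically zero, and has the same zero set on $[0,2\pi]$ as $(Z_t)$; in particular it has only finitely many zeros there.

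The key step is to show that, almost surely under $\mathbb P\otimes\mathbb P_X$, the field $S_\infty$ has no critical zero in $[0,2\pi]$ and does not vanish at the endpoints. For each fixed $t$ the law of $S_\infty(t)$ is a scale mixture of centered Gaussians of variance $2\pi\psi_\rho(X)\ge 2\pi\kappa_\rho$, hence has a density bounded by a constant $C_0=C_0(\kappa_\rho)$, uniformly in $t\in[0,2\pi]$; this already gives $\mathbb P\otimes\mathbb P_X(S_\infty(0)=0)=\mathbb P\otimes\mathbb P_X(S_\infty(2\pi)=0)=0$. For the absence of critical zeros I would run a Bulinskaya–type argument: set $M:=\sup_{t\in[0,2\pi]}|S_\infty''(t)|$, which is a.s.\ finite; on the event $\{M\le K\}$, a second-order Taylor expansion shows that a critical zero $t^*$ forces $|S_\infty(t_j)|\le \tfrac12 Kh^2$ at the closest point $t_j$ of a mesh of step $2h$, and summing the $O(h^{-1})$ probabilities $\mathbb P(|S_\infty(t_j)|\le \tfrac12 Kh^2)\le C_0 Kh^2$ yields a bound $O(Kh)$; letting $h\to0$ and then $K\to\infty$ proves the claim. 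Consequently, almost surely, $S_\infty$ has finitely many zeros in $[0,2\pi]$, all of them interior and simple.

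It then remains to record the routine continuity of the zero count. Let $\mathcal D\subset\mathcal C^1([0,2\pi])$ consist of the functions $g$ with $g(0)\ne0$, $g(2\pi)\ne0$ and no critical zero in $[0,2\pi]$. If $g\in\mathcal D$ has zeros $t_1<\dots<t_m$, all interior, one picks disjoint neighborhoods $U_i\ni t_i$ on which $g$ is strictly monotone together with $\delta>0$ such that $|g|\ge\delta$ off $\bigcup_i U_i$; then any $h$ with $\|h-g\|_{\mathcal C^1}$ small enough is strictly monotone and changes sign on each $U_i$ and satisfies $|h|\ge\delta/2$ elsewhere, so $\mathcal N(h,[0,2\pi])=m=\mathcal N(g,[0,2\pi])$. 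Hence the functional is finite and locally constant, in particular continuous, on $\mathcal D$, and its discontinuity set is contained in $\mathcal C^1([0,2\pi])\setminus\mathcal D$. Since $\mathbb P\otimes\mathbb P_X(S_\infty\in\mathcal D)=1$ by the previous step, Theorem~\ref{theo.clt} together with the continuous mapping theorem gives that $\mathcal N(S_n,[0,2\pi])$ converges in distribution to $\mathcal N(S_\infty,[0,2\pi])$.

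The only genuinely delicate point is the second step, i.e.\ the absence of degenerate zeros and of zeros at the endpoints for the limit field. This is precisely where the positivity $\kappa_\rho>0$ in Condition~(A.3) enters in an essential way: it supplies the uniform-in-$t$ density bound that powers the Bulinskaya argument, and simultaneously rules out both a non-transverse crossing and a pathological accumulation of zeros. The regularity of $S_\infty$, the local constancy of the zero count, and the concluding application of the continuous mapping theorem are all standard.
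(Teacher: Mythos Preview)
Your proposal is correct and follows essentially the same approach as the paper: both argue that the limit process $S_\infty$ is non-degenerate via a Bulinskaya-type argument (using the positivity $\kappa_\rho>0$ from Condition~(A.3)), and then invoke the continuity of the zero-counting functional in the $\mathcal C^1$ topology at non-degenerate functions to conclude by the continuous mapping theorem. The only difference is one of presentation: the paper dispatches both steps in a sentence each by citing Bulinskaya's Lemma and Section~4 of \cite{MR4491425}, whereas you spell out the mesh argument for non-degeneracy and the local-constancy of $\mathcal N(\cdot,[0,2\pi])$ explicitly, and you additionally make the endpoint issue $S_\infty(0)\neq 0$, $S_\infty(2\pi)\neq 0$ explicit.
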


\begin{proof}[Proof of Corollary \ref{cor.conv.nodal}] Recall that a real smooth stochastic process $(S_t)_{t \in \mathbb R}$ is said to be non-degenerate if almost surely, $S(t)=0$ implies that $S'(t)\neq 0$. Under our Condition (A.3) on the uniform positiveness of the underlying spectral measure $\psi_{\rho}$, the limit process $(S_{\infty}(t))=\sqrt{2\pi\psi_\rho(X)}\times(Z_t)$ appearing in Theorem \ref{theo.clt} is non-degenerate because the Gaussian process $(Z_t)$ is itself non-degenerate, via an immediate application of Bulinskaya's Lemma. The above corollary is then a consequence of the fact that the number of zeros is a continuous functional with respect to the $\mathcal C^1([0,2\pi])$ in a neighborhood of any non-degenerate function, see for example Section 4 of \cite{MR4491425}.
\end{proof}
\subsection{Tightness estimates}\label{sec.tight}

\medskip

In order to establish the tightness in the \newline $\mathcal{C}^1([0,2\pi])-$topology inherent to Theorem  \ref{theo.clt}, it is sufficient to prove that both $(S_n(t))_{t \in [0,2\pi]}$ and $(S_n'(t))_{t \in [0,2\pi]}$ are tight in $\mathcal{C}^0([0,2\pi])$. This will be obtained via a classical Lamperti criterion, see for example Theorem 1 and Remark 1 of \cite{MR1908331}. Recall that under Condition (A.1), the random variables $(a_{k,n})$ and $(b_{k,n})$ are centered with unit variance. For convenience, we set $R_{k,n}(t):=a_{k,n} \cos(k X+ \frac{kt}{n})+b_{k,n} \sin(k X+ \frac{kt}{n})$ so that 
\[
\mathbb E_X\mathbb E\left[\left|S_n(t)-S_n(s)\right|^2\right]=\mathbb{E}\left[\frac{1}{n}\sum_{k,l=1}^n \mathbb{E}_X\left[\left(R_{k,n}(t)-R_{k,n}(s)\right)\left(R_{l,n}(t)-R_{l,n}(s)\right)\right]\right].
\]
One then notices that for any $t\in[0,2\pi]$, $R_{k,n}(t)$ is a linear combination of $\cos(kX),\sin(kX)$ so that, by orthogonality in $L^2([0,2\pi], \mathbb P_X)$, for $k\neq l$ we get 
\[
\mathbb{E}_X\left[\left(R_{k,n}(t)-R_{k,n}(s)\right)\left(R_{l,n}(t)-R_{l,n}(s)\right)\right]=0.
\] 
Besides, when $k=l$, a direct computation yields 
\[
\mathbb E \left[ \left(R_{k,n}(t)-R_{k,n}(s)\right)^2\right]=2 \left( 1 - \cos\left( \frac{k}{n} (t-s) \right) \right) \leq |t-s|^2,
\]
so that 
\[
\mathbb E_X\mathbb E\left[\left|S_n(t)-S_n(s)\right|^2\right] \leq |t-s|^2.
\]
In the exact same manner, replacing $(a_k,b_k)$ by $(\frac{k b_k}{n},\frac{-k a_k}{n})$ in the above computation, we get
\[
\mathbb E_X\mathbb{E}\left[\left|S_n'(t)-S_n'(s)\right|^2\right]\le  |t-s|^2,
\]
hence the tightness of the laws of the processes $((S_n(t))_{t \in [0,2\pi]})_{n \geq 1}$.

\subsection{Finite dimensional convergence in the $m_n-$dependent case}\label{sec.clt.mdep}
Let us now establish the finite dimensional convergence of the process $(S_n(t))_{t \in [0,2\pi]}$ under $\mathbb P \otimes \mathbb P_X$ and  under the assumption of Theorem \ref{theo.clt}.
In other words, assuming that the random coefficients are $m_n-$dependent, we fix $d\ge 1$ as well as $(t_1,t_2,\cdots,t_d)\in[0,2\pi]^d$ and we want to prove that under $\mathbb P \otimes \mathbb P_X$,
\begin{equation}\label{finite-marginal-claim}\left(S_n(t_1),\cdots,S_n(t_d)\right)\xrightarrow[n\to\infty]{\text{Law}} \sqrt{2\pi\psi_\rho(X)}\left(Z_{t_1},\cdots,Z_{t_d}\right),
\end{equation}
where we recall that $\psi_\rho$ is the limit spectral density appearing in \eqref{eq.lowerb}, $X$ is uniformly distributed on $[0,2\pi]$ and independent of $(Z_t)_{t\ge 0}$ which is a Gaussian process with correlation function $\sin_c(x)=\frac{\sin(x)}{x}$. 
\par
\medskip
Recall that, under Condition (A.3), $\mathbb{P}_X$ almost surely, we have $\psi_\rho(X)\geq \kappa_{\rho}>0$.
To establish the convergence \eqref{finite-marginal-claim}, we will in fact prove that $\mathbb P_X$ almost surely, we have 
\begin{equation}\label{finite-marginal-claim2}\frac{1}{\sqrt{2\pi\psi_\rho(X)}}\left(S_n(t_1),\cdots,S_n(t_d)\right)\xrightarrow[n\to\infty]{\text{Law}} \left(Z_{t_1},\cdots,Z_{t_d}\right),
\end{equation}
the latter convergence \eqref{finite-marginal-claim2} being equivalent, via the use of characteristic functions, to the following convergence, $\mathbb P_X$ almost surely, for all fixed vector $\left(\xi_1,\cdots,\xi_d\right)\in\mathbb{R}^d$ 
\[
\mathbb{E}\left[e^{i \sum_{k=1}^d \frac{\xi_k S_n(t_k)}{\sqrt{2\pi \psi_\rho(X)}}}\right]
\xrightarrow[n \to +\infty]{} \exp\left(-\frac{1}{2} \times \sum_{k,l=1}^d \xi_k \xi_l \sin_c(t_k-t_l)\right).
\]
Using the change of variables $\xi_k \to \sqrt{2\pi \psi_\rho(X)} \xi_k$, we will indeed get that $\mathbb P_X$ almost surely
\[
\mathbb{E}\left[e^{i \sum_{k=1}^d \xi_k S_n(t_k)}\right]
\xrightarrow[n \to +\infty]{} \exp\left(-\frac{1}{2} \times 2\pi \psi_{\rho}(X)\times \sum_{k,l=1}^d \xi_k \xi_l \sin_c(t_k-t_l)\right).
\]
Taking the expectation with respect to $\mathbb P_X$, by dominated convergence, we will then have 
\[
\mathbb E_X \left[ \mathbb{E}\left[e^{i \sum_{k=1}^d \xi_k S_n(t_k)}\right]\right]
\xrightarrow[n \to +\infty]{} \mathbb E_X \left[ \exp\left(-\frac{1}{2} \times 2\pi \psi_{\rho}(X)\times \sum_{k,l=1}^d \xi_k \xi_l \sin_c(t_k-t_l)\right)\right],
\]
which is indeed equivalent to \eqref{finite-marginal-claim}. In order to establish the convergence \eqref{finite-marginal-claim2}, we will use a CLT for sums of $m_n-$dependent random variables which is a widely explored topic, see for example \cite{janson2021central,MR4710537,liu2022wasserstein,liu2023smooth} and the references therein. 
We write the sum of interest under the form
\[
\sum_{k=1}^d  \frac{\xi_k S_n(t_k)}{ \sqrt{2\pi \psi_{\rho}(X)}}=\sum_{k=1}^n X_{k,n},
\]
where
{\small 
$$X_{k,n}:=\frac{1}{\sqrt{2\pi n\psi_{\rho}(X)}}\left(a_{k,n}\left(\sum_{l=1}^d \xi_l \cos\left(k X +\frac{k t_l}{n}\right)\right)+b_{k,n} \left(\sum_{l=1}^d \xi_l \sin\left(k X +\frac{k t_l}{n}\right)\right)\right).$$
}
Since by our assumptions, the sequences $(a_{k,n})_{k \geq 1}$ and ($b_{k,n})_{k \geq 1}$ are here $m_n-$dependent under $\mathbb P$, so is the sequence $(X_{k,n})_{1\le k \le n}$, under $\mathbb P$ and conditionally to $X$. 
Let us show that $\mathbb P_X$ almost surely, under $\mathbb P$, the sequence $\sum_{k=1}^n X_{k,n}$ indeed satisfies a CLT.
In order to apply Theorem 1.4. of \cite{janson2021central}, setting $\sigma_n^2:=\mathbb E\left[(\sum_{k=1}^n X_{k,n})^2\right]$, we only need to check that $\sigma_n^2>0$ for $n$ large enough and then the validity of the so-called \textit{Lindeberg condition} 
\begin{equation}\label{Lindeberg}
\forall \varepsilon>0, \quad \lim_{n\to\infty}\frac{m_n}{\sigma_n^2}\sum_{k=1}^n \mathbb E\left[X_{k,n}^2 \mathds{1}_{\{|X_{k,n}|>\varepsilon \frac{\sigma_n}{m_n}\}}\right]=0.
\end{equation}
Let us first determine the asymptotic behaviour of $\sigma_n$ as $n$ goes to infinity. Such a computation has already been performed in \cite{MR4491425}. Indeed, following Lemma 3.5 of the latter reference, we have the representation
\[
\sigma_n^2 = \frac{ K_n^{t,\xi} \ast \mu_{\rho_n}(X)}{\psi_{\rho}(X)} = \frac{1}{2\pi\psi_{\rho}(X) }\left( \int_0^{2\pi} K_n^{t,\xi}(x)dx\right) \bar{K}_n^{t,\xi}\ast \mu_{\rho_n}(X),
\]
with 
\[
K_n^{t,\xi}(x):=\frac{1}{n}\left| \sum_{l=1}^{d}\xi_l e^{i\frac{(n+1)}{2n}t_l} \frac{\sin\left(\frac{n}{2}(x+\frac{t_l}{n})\right)}{\sin\left(\frac{x+\frac{t_l}{n}}{2}\right)}\right|^2, \quad 
\bar{K}_n^{t,\xi}(x):=\frac{{2\pi} \, K_n^{t,\xi}(x)}{\int_0^{2\pi}K_n^{t,\xi}(x)dx}.
\]
By Lemma 5.1 of the same reference, the function $x \mapsto \bar{K}_n^{t,\xi}(x)$ is a good trigonometric kernel with respect to the normalized Lebesgue measure on $[0, 2\pi]$.  As a result, since under our assumptions we have $\mu_{\rho_n}(dx) = \psi_{\rho_n}(x) dx$ and $\mu_{\rho}(dx) = \psi_{\rho}(x) dx$, using Condition (A.3), we deduce that as $n$ goes to infinity
\[
| \bar{K}_n^{t,\xi} \ast  \mu_{\rho_n}(X) - \bar{K}_n^{t,\xi} \ast  \mu_{\rho_n}(X)| \leq ||\psi_{\rho_n} - \psi_{\rho}||_{\infty} =o(1).
\]
Otherwise, using Lemma 3.6 of \cite{MR4491425}, we have that $\mathbb P_X$ almost surely, as $n$ goes to infinity 
\[
\lim_{n \to +\infty} \bar{K}_n^{t,\xi} \ast \mu_{\rho}(X) = \psi_{\rho}(X).
\]
Finally, by Lemma 3.7 of the same reference again, we have as $n$ goes to infinity
\[
\int_0^{2\pi} K_n^{t,\lambda}(x) dx =2 \pi \sum_{k, l=1}^d \xi_k \xi_l \sin_c\left(t_k-t_l\right) + O\left( \frac{1}{n} \right).
\]
As a result, we can conclude that $\mathbb P_X$ almost surely, as $n$ goes to infinity, we have 
\begin{equation}\label{limit-var}
\sigma_n^2 \xrightarrow[n\to\infty]~\sum_{k,l=1}^d \xi_k \xi_l\sin_c(t_k-t_l).
\end{equation}
In particular, for fixed vectors $(t_k)_{1\leq k\leq d}$ and $(\xi_k)_{1\leq k\leq d}$, we have   $$\liminf_{n \to +\infty} \sigma_n^2>0.$$ 
Let us now check the validity of the Lindeberg condition \eqref{Lindeberg}. 
Recall again that, under Condition (A.3), $\mathbb{P}_X$ almost surely, we have $\psi_\rho(X)\geq \kappa_{\rho}>0$. Therefore, we have the rough upper bound
\[
|X_{k,n}|\le \frac{|a_{k,n}|+|b_{k,n}|}{\sqrt{2\pi \kappa_{\rho} n}}\sum_{l=1}^d |\xi_l|=:\frac{C_{\rho}}{\sqrt{n}}\left(|a_{k,n}|+|b_{k,n}|\right),
\]
so that under Condition (A.1), we have for $\eta>0$ small enough 
\[
\mathbb E[|X_{k,n}|^{2+\eta}] \leq \left( \frac{4C_{\rho}^2}{n} \right)^{1+\frac{\eta}{2}} K_{\eta}.
\]
For $\varepsilon>0$, by H\"older and Markov inequalities we get 
\[
\begin{array}{ll}
\displaystyle{
\mathbb{E}\left[X_{k,n}^2\textbf{1}_{\{|X_{k,n}|>\varepsilon \frac{\sigma_n}{m_n}\}}\right]} & \displaystyle{\le \mathbb E[|X_{k,n}|^{2+\eta}]^{\frac{2}{2+\eta}}\mathbb{P}\left(|X_{k,n}|>\varepsilon \frac{\sigma_n}{m_n}\right)^{\frac{\eta}{2+\eta}}}\\
\\
& \displaystyle{ \leq \mathbb E[|X_{k,n}|^{2+\eta}] \left( \frac{m_n}{\sigma_n \varepsilon}\right)^{\eta}}.
\end{array}
\]
Summing over $k$, we thus get 
\begin{equation}\label{eq.bound.lindeberg}
\frac{m_n}{\sigma_n^2}\sum_{k=1}^n \mathbb E\left[X_{k,n}^2 \mathds{1}_{\{|X_{k,n}|>\varepsilon \frac{\sigma_n}{m_n}\}}\right] \leq 
\left( 4C_{\rho}^2\right)^{1+\frac{\eta}{2}} K_{\eta} \frac{1}{\sigma_n^{2+\eta} \varepsilon^{\eta}}\frac{m_n^{1+\eta}}{n^{\eta/2}}.
\end{equation}
Since $\liminf_{n \to +\infty} \sigma_n^2>0$ and $\varepsilon$ is fixed here, if $m_n = O(n^{\gamma})$ with $\gamma<\eta/2(1+\eta)$ the right hand side of the last equation goes to zero as $n$ goes to infinity and Lindeberg condition \eqref{Lindeberg} indeed applies. Therefore, Theorem 1.4 of \cite{janson2021central} ensures that $\mathbb{P}_X$ almost surely, the convergence \eqref{finite-marginal-claim2} holds, in other words
\begin{equation*}\label{CLT-m-dependent}
\sum_{k=1}^n X_{k,n}\xrightarrow[n\to\infty]{\text{Law under}~\mathbb{P}}~\mathcal{N}\left(0,\sum_{k,l=1}^d \xi_k\xi_l\sin_c(t_k-t_l)\right).
\end{equation*}
Associated with the tightness estimates established in the last Section \ref{sec.tight}, this concludes the proof of Theorem \ref{theo.clt}.

{
\begin{remark}\label{rem.allmoments}
Note that in the case where the random variables $(a_{k,n})$ and $(b_{k,n})$ have uniformly bounded moments of all orders, which happens for example in the Rademacher case, then the parameter $\eta$ in Condition (A.1) can be chosen arbitrarily large, so that the conclusion of the above  Theorem \ref{theo.clt} is valid for a threshold $m_n = O(n^{\gamma})$ with $\gamma$ arbitrarily close to $1/2$.
\end{remark}
}
\subsection{Quantification of the convergence} \label{sec.rate}

We now establish a polynomial rate of convergence for the one dimensional marginals in the above functional Central Limit Theorem.
This rate will be exploited in the next Section \ref{sec.using.rate.clt} to establish the uniform integrability of the sequence $(\mathcal N(g_n, [0,2\pi]))_{n \geq 1}$ and hence the universality of the asymptotics of the expected number of zeros of $f_n$.
We denote by $\text{d}_K^{\mathbb P}$ the Kolmogorov distance under $\mathbb P$ 
\[
\mathrm{d}_K^{\mathbb P} \left(U, V\right)=\sup_{s \in \mathbb R} \left| \mathbb P  \left(U>s  \right)- \mathbb P  \left( V>s  \right)\right|.
\]
{
\begin{prop}\label{pro.rate}
Let us assume that the sequence $(m_n)$ satisfies $m_n=O(n^{\gamma})$  with $\gamma<\frac{\eta}{2(1+\eta)}$.  Let $N$ be a real random variable with standard Gaussian distribution  under $\mathbb P$ and independent of $X$. Then, $\mathbb P_X$ almost surely, as $n$ goes to infinity, we have 
\[
\mathrm{d}_K^{\mathbb P} \left(\frac{S_n(0)}{\sqrt{2\pi \psi_{\rho}(X)}}, N\right)  = O\left( \frac{1}{n^{\left( \frac{1+\eta}{4+6\eta}\right) \left( \frac{\eta}{2(1+\eta)}-\gamma \right)}} \right).
\]
\end{prop}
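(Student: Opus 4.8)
The plan is to freeze the random phase $X$ and read $S_n(0)/\sqrt{2\pi\psi_\rho(X)}$ as a normalized sum of an $m_n$-dependent triangular array, apply a quantitative central limit theorem for $m$-dependent variables, and then optimize the free parameter in that estimate. Concretely, for Lebesgue-almost every $X$ one has
\[
\frac{S_n(0)}{\sqrt{2\pi\psi_\rho(X)}}=\sum_{k=1}^n Y_{k,n},\qquad Y_{k,n}:=\frac{a_{k,n}\cos(kX)+b_{k,n}\sin(kX)}{\sqrt{2\pi n\,\psi_\rho(X)}},
\]
which is exactly the array $\sum_k X_{k,n}$ of Section~\ref{sec.clt.mdep} in the case $d=1$, $t_1=0$, $\xi_1=1$; conditionally on $X$ it is, under $\mathbb P$, a centered $m_n$-dependent array. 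By \eqref{limit-var} its variance satisfies $\sigma_n^2\to 1$, hence $\sigma_n^2\ge 1/2$ for $n$ large, and the computation leading to \eqref{eq.bound.lindeberg} — which uses only Condition (A.1) and $\psi_\rho(X)\ge\kappa_\rho$ — gives the Lyapunov control $\mathrm{Ly}_n:=\sum_{k=1}^n\mathbb E[|Y_{k,n}|^{2+\eta}]\le (4C_\rho^2)^{1+\eta/2}K_\eta\, n^{-\eta/2}$, with a constant independent of $X$ since $|\cos|,|\sin|\le 1$. It thus suffices to produce a polynomial Berry--Esseen rate for $(Y_{k,n})$, uniformly in $X$.

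The second step is to feed these estimates into a Berry--Esseen-type bound for sums of $m$-dependent variables valid under a finite $(2+\eta)$-th moment — of the kind available in the references already cited, \cite{janson2021central,liu2022wasserstein,liu2023smooth,MR4710537} — which yields, under $\mathbb P$, a bound of the form $\mathrm{d}_K^{\mathbb P}\big(\sigma_n^{-1}\sum_{k=1}^n Y_{k,n},\,N\big)\le C(\eta)\big(m_n^{\,1+\eta}\,\mathrm{Ly}_n/\sigma_n^{2+\eta}\big)^{\theta(\eta)}$ for explicit exponents. If one prefers a self-contained argument, one truncates $a_{k,n},b_{k,n}$ at a level $n^{\lambda}$ — the truncation error being $\lesssim n\,\mathbb P(|a_{1,n}|>n^{\lambda})$ plus mean and variance corrections, hence $\lesssim n^{1-\lambda(2+\eta)}+\cdots$ — and applies a Berry--Esseen bound for the bounded, recentered $m_n$-dependent array via Esseen's smoothing inequality together with a big-block/small-block splitting of $\{1,\dots,n\}$ into $O(n/p)$ independent blocks of size $p\gg m_n$; optimizing $\lambda$ then plays the role of optimizing $\theta$. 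In either route, inserting $\mathrm{Ly}_n=O(n^{-\eta/2})$, $\sigma_n\asymp 1$ and $m_n=O(n^{\gamma})$ and optimizing gives
\[
\mathrm{d}_K^{\mathbb P}\Big(\tfrac{1}{\sigma_n}\sum_{k=1}^n Y_{k,n},\,N\Big)=O\!\left(n^{-\left(\frac{1+\eta}{4+6\eta}\right)\left(\frac{\eta}{2(1+\eta)}-\gamma\right)}\right),
\]
the exponent being positive precisely because $\gamma<\tfrac{\eta}{2(1+\eta)}$.

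It then remains to de-normalize. Since $\mathrm{d}_K$ is invariant under positive rescaling, $\mathrm{d}_K^{\mathbb P}(\sum_k Y_{k,n},N)\le\mathrm{d}_K^{\mathbb P}(\sigma_n^{-1}\sum_k Y_{k,n},N)+\mathrm{d}_K(\sigma_n N,N)$, and $\mathrm{d}_K(\sigma_n N,N)=\sup_x|\Phi(x/\sigma_n)-\Phi(x)|\le C\,|\sigma_n^2-1|$; tracking the derivation of \eqref{limit-var} in the $m_n$-dependent regime — where the kernel $\bar K_n^{0,1}$ has effective width $O(1/n)$ and $\psi_{\rho_n}$ is a trigonometric polynomial of degree $\le m_n$, hence $O(m_n)$-Lipschitz by Bernstein's inequality — one checks that $|\sigma_n^2-1|$ tends to $0$ at a polynomial rate dominated by the one above, so this term is absorbed. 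Finally, all the bounds above depend on $X$ only through $\psi_\rho(X)\ge\kappa_\rho>0$, so they hold with a constant uniform in $X$, a fortiori $\mathbb P_X$-almost surely.

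The crux is the second step. With only a $(2+\eta)$-th moment and a dependence range $m_n$ growing with $n$, no Berry--Esseen bound of optimal order is available, and the Kolmogorov distance — unlike a Wasserstein or smoothed distance — forces an additional loss when one passes from a smooth test class to indicator functions. The compounding of these two effects is what brings the exponent down to $\tfrac{1+\eta}{4+6\eta}$ rather than the $\tfrac{1+\eta}{2}$ or $1+\eta$ that a naive count suggests, and is precisely the source of the gap between the threshold $\gamma_0(\eta)$ of Theorem~\ref{theo.main.mdep} and the value $\tfrac{\eta}{2(1+\eta)}$ anticipated in the remark following it.
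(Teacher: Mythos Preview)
Your overall architecture is exactly the paper's: freeze $X$, write $S_n(0)/\sqrt{2\pi\psi_\rho(X)}=\sum_{k}X_{k,n}$ as a centered $m_n$-dependent triangular array (the $d=1$, $t_1=0$, $\xi_1=1$ instance of Section~\ref{sec.clt.mdep}), invoke a quantitative CLT for such arrays from \cite{MR4710537}, and optimize. Where you hand-wave, the paper is concrete: combining Lemma~2 and Theorem~4 of \cite{MR4710537} yields, for every $\varepsilon>0$,
\[
\mathrm{d}_K^{\mathbb P}\Big(\textstyle\sum_k X_{k,n},\,N\Big)^2\le 120\big(\varepsilon^{1/3}+12\,U_n(\varepsilon/2)^{1/2}\big),\qquad U_n(\varepsilon)=\frac{m_n}{\sigma_n^2}\sum_{k}\mathbb E\big[X_{k,n}^2\mathds 1_{|X_{k,n}|>\varepsilon\sigma_n/m_n}\big],
\]
and \eqref{eq.bound.lindeberg} gives $U_n(\varepsilon)=O(\varepsilon^{-\eta}m_n^{1+\eta}n^{-\eta/2})$. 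The exponent $\tfrac{1+\eta}{4+6\eta}$ then comes from \emph{balancing} $\varepsilon^{1/3}$ against $U_n(\varepsilon)^{1/2}$ over $\varepsilon=n^{-\delta}$, giving $\mathrm{d}_K=O(\varepsilon^{1/6})$. Your formulation $\mathrm{d}_K\lesssim(m_n^{1+\eta}\mathrm{Ly}_n/\sigma_n^{2+\eta})^{\theta(\eta)}$ has no free parameter to optimize, so the sentence ``optimizing gives'' does not actually account for where the precise exponent comes from; the alternative truncation-and-block sketch you propose would need substantial work to recover exactly this rate.

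There is also a genuine gap in your de-normalization paragraph. You assert that $|\sigma_n^2-1|$ decays polynomially, arguing via Bernstein's inequality that $\psi_{\rho_n}$ is $O(m_n)$-Lipschitz. But Bernstein controls only the \emph{oscillation} of $\psi_{\rho_n}$, not its uniform distance to $\psi_\rho$: Condition~(A.3) merely states $\|\psi_{\rho_n}-\psi_\rho\|_\infty=o(1)$ with \emph{no rate}, so $|\bar K_n\ast\psi_{\rho_n}(X)-\bar K_n\ast\psi_\rho(X)|\le\|\psi_{\rho_n}-\psi_\rho\|_\infty$ can decay arbitrarily slowly and your bound $\mathrm{d}_K(\sigma_n N,N)\lesssim|\sigma_n^2-1|$ need not be absorbed. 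The paper avoids this issue entirely: the inequality from \cite{MR4710537} is applied directly to $\sum_k X_{k,n}$ (with $\sigma_n$ appearing only inside $U_n$), so no separate comparison between $\sigma_n N$ and $N$ is needed.
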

}
\begin{proof}Let us adopt here the same notations as in the last Section \ref{sec.clt.mdep}, with $d=1$, $t_1=0$ and $\xi_1=1$ and set 
\[
\frac{S_n(0)}{ \sqrt{2\pi \psi_{\rho}(X)}}=\sum_{k=1}^n X_{k,n}
\]
where
\[
X_{k,n}:=\frac{1}{\sqrt{2\pi n\psi_{\rho}(X)}}\left(a_{k,n} \cos\left(k X\right)+b_{k,n}  \sin\left(k X\right)\right).
\]
Combining Lemma 2 and Theorem 4 of \cite{MR4710537}, we have the following upper bound, valid for all $\varepsilon_n>0$
\begin{equation}\label{eq.upper.U}
\mathrm{d}_K^{\mathbb P} \left(\frac{S_n(0)}{\sqrt{2\pi \psi_{\rho}(X)}}, N\right)^2 \leq 120 \left
(\varepsilon_n^{1/3} + 12 U_n(\varepsilon_n/2)^{1/2}\right),  
\end{equation}
where $U_n$ is the term controlling the Lindeberg condition, namely
\[
U_n(\varepsilon_n):=\frac{m_n}{\sigma_n^2}\sum_{k=1}^n \mathbb E\left[X_{k,n}^2 \mathds{1}_{\{|X_{k,n}|>\varepsilon_n \frac{\sigma_n}{m_n}\}}\right]\quad \text{and} \quad \sigma_n^2 =\mathbb E\left[ \left(\sum_{k=1}^n X_{k,n}\right)^2\right].
\]
By Equation \eqref{limit-var}, as $n$ goes to infinity, we have here $\lim_{n \to +\infty} \sigma_n^2=1$.
{By Equation \eqref{eq.bound.lindeberg}, recalling that $\varepsilon=\varepsilon_n$ now,  we have  
\[
U_n(\varepsilon_n) = O \left(  \frac{1}{ \varepsilon_n^{\eta}}\frac{m_n^{1+\eta}}{n^{\eta/2}}\right).
\]
For $\varepsilon_n =n^{-\delta}$ and $m_n=n^{\gamma}$, this last term goes to zero as soon as $\delta \eta + \gamma (1+\eta)-\eta/2<0$. Balancing the two factors on the right hand side of Equation \eqref{eq.upper.U}, we get finally  
\[
\mathrm{d}_K^{\mathbb P}\left(\frac{S_n(0)}{\sqrt{2\pi \psi_{\rho}(X)}}, N\right)=O(\varepsilon_n^{1/6}) = O\left(n^{-\left( \frac{1+\eta}{4+6\eta}\right) \left( \frac{\eta}{2(1+\eta)}-\gamma \right)} \right).
\]
}
\end{proof}

{
\begin{cor}\label{cor.rate}Under the hypotheses of Theorem \ref{theo.clt}, for any $0<\gamma<\frac{\eta}{2(1+\eta)}$, uniformly in $s>0$ and for $n$ large enough, we have
\[
 \mathbb P \otimes \mathbb P_X \left(|S_n(0)|<s  \right) \leq \frac{2s }{\pi\sqrt{\kappa_{\rho}} }\wedge 1 +O\left(n^{-\left( \frac{1+\eta}{4+6\eta}\right) \left( \frac{\eta}{2(1+\eta)}-\gamma \right)} \right).
\]
\end{cor}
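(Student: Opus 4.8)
The plan is to convert the quantitative marginal CLT of Proposition~\ref{pro.rate} into a small-ball estimate for $|S_n(0)|$, absorbing the random amplitude $\sqrt{2\pi\psi_\rho(X)}$ by means of the sole lower bound $\psi_\rho(X)\ge\kappa_\rho$ provided by Condition~(A.3). First I would set $U_n:=S_n(0)/\sqrt{2\pi\psi_\rho(X)}$ and condition on $X$. Since $\psi_\rho(X)\ge\kappa_\rho$ holds $\mathbb P_X$-almost surely, the event $\{|S_n(0)|<s\}=\{|U_n|<s/\sqrt{2\pi\psi_\rho(X)}\}$ is $\mathbb P_X$-a.s.\ contained in $\{|U_n|<s/\sqrt{2\pi\kappa_\rho}\}$, so that
\[
\mathbb P\otimes\mathbb P_X\bigl(|S_n(0)|<s\bigr)\ \le\ \mathbb E_X\bigl[\,\mathbb P\bigl(|U_n|<s/\sqrt{2\pi\kappa_\rho}\bigr)\,\bigr].
\]

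Next I would use the elementary comparison, valid for every $a>0$ with $N$ a standard Gaussian under $\mathbb P$,
\[
\mathbb P(|U_n|<a)\ \le\ \mathbb P(U_n>-a)-\mathbb P(U_n>a)\ \le\ \mathbb P(|N|<a)+2\,\mathrm d_K^{\mathbb P}(U_n,N),
\]
obtained by writing the symmetric interval probability as a difference of the two right tails at $-a$ and $a$ and applying the definition of $\mathrm d_K^{\mathbb P}$ to each. The Gaussian term is bounded crudely via $\mathbb P(|N|<a)\le(2a/\sqrt{2\pi})\wedge1$; taking $a=s/\sqrt{2\pi\kappa_\rho}$ produces exactly $\tfrac{s}{\pi\sqrt{\kappa_\rho}}\wedge1$, which is the announced main term with a factor $2$ to spare. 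The term $\mathrm d_K^{\mathbb P}(U_n,N)$ is then handled by the rate of Proposition~\ref{pro.rate}, and since that rate does not involve $s$, uniformity in $s>0$ comes for free once we integrate over $X$.

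The only delicate point, and the main (though mild) obstacle, is that Proposition~\ref{pro.rate} is stated $\mathbb P_X$-almost surely, whereas here $\mathrm d_K^{\mathbb P}(U_n,N)$ must be integrated against $\mathbb P_X$. I would resolve this by observing that the implied constant in Proposition~\ref{pro.rate} is in fact uniform in $X$: its proof invokes only $\eta$, the moment bound $K_\eta$, the constant $C_\rho$ (a function of $\kappa_\rho$ alone), and a lower bound on $\sigma_n^2$, and $\sigma_n^2$ is bounded below uniformly in $X$ for $n$ large. Indeed, in the present case $d=1$, $t_1=0$, $\xi_1=1$ the kernel $K_n^{t,\xi}$ is the Fej\'er kernel $F_n$, whence $\sigma_n^2=(F_n*\mu_{\rho_n})(X)/\psi_\rho(X)$ with $(F_n*\mu_{\rho_n})(X)\ge\inf_x\psi_{\rho_n}(x)\ge\kappa_\rho/2$ for $n$ large (by Condition~(A.3) and the fact that $F_n$ is an approximate identity), while $\psi_\rho$ is bounded, being a uniform limit of trigonometric polynomials. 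Consequently $\sup_X\mathrm d_K^{\mathbb P}(U_n,N)=O\bigl(n^{-(\frac{1+\eta}{4+6\eta})(\frac{\eta}{2(1+\eta)}-\gamma)}\bigr)$, and combining the three displays above and taking $\mathbb E_X$ yields the stated inequality.
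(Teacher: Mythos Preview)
Your argument is correct and follows essentially the same route as the paper: apply the Kolmogorov-distance bound of Proposition~\ref{pro.rate} conditionally on $X$, compare the resulting interval probability to that of a standard Gaussian, and absorb the random amplitude via $\psi_\rho(X)\ge\kappa_\rho$. Your explicit justification that the implied constant in Proposition~\ref{pro.rate} is uniform in $X$ (through a uniform lower bound on $\sigma_n^2$) is a point the paper leaves implicit, and your observation that the main term comes out as $\tfrac{s}{\pi\sqrt{\kappa_\rho}}\wedge1$ rather than $\tfrac{2s}{\pi\sqrt{\kappa_\rho}}\wedge1$ is also correct.
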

\begin{proof}Following the last Proposition \ref{pro.rate}, $\mathbb P_X$ almost surely and uniformly in $s>0$, we have 
\[
\left| \mathbb P \left(\frac{|S_n(0)|}{\sqrt{2\pi \psi_{\rho}(X)}}<s\right) - \mathbb P \left( |N|<s \right)\right|=O\left(n^{-\left( \frac{1+\eta}{4+6\eta}\right) \left( \frac{\eta}{2(1+\eta)}-\gamma \right)} \right).
\]
Using the change of variable $s \mapsto s \times \sqrt{2\pi \psi_{\rho}(X)}$, $\mathbb P_X$ almost surely and uniformly in $s>0$, we have similarly 
\[
\left| \mathbb P \left(|S_n(0)|<s\right) - \mathbb P \left( \sqrt{2\pi \psi_{\rho}(X)} |N|<s \right)\right|=O\left(n^{-\left( \frac{1+\eta}{4+6\eta}\right) \left( \frac{\eta}{2(1+\eta)}-\gamma \right)} \right).
\]
Now since the density of the standard Gaussian variable $N$ is bounded by $1/\sqrt{2\pi}$, recalling that under Condition (A.3), $\mathbb P_X$ almost surely we have $\psi_{\rho}(X) \geq \kappa_{\rho}>0$, we get
\[
\mathbb P \left( \sqrt{2\pi \psi_{\rho}(X)} |N|<s \right) \leq \frac{2s }{\pi\sqrt{\kappa_{\rho}}} \wedge 1,
\]
hence the result.
\end{proof}
}

{
\begin{remark}
As already noticed in Remark \ref{rem.allmoments} above, in the case where the random variables $(a_{k,n})$ and $(b_{k,n})$ have uniformly bounded moments of all orders, the parameter $\eta$ in Condition (A.1) can be chosen arbitrarily large, so that in both Proposition \ref{pro.rate} and Corollary \ref{cor.rate}, the estimates are in fact of order $n^{-(1/2-\gamma)/6}$.
\end{remark}
}
\section{Anti-concentration estimates} \label{sec.anticon}
The goal of this section is to establish strong anti-concentration estimates for  the sup norm $||S_n ||_{\infty}$ of our random field, under the hypothesis that the coefficients $(a_{k,n})_{k}$ and $(b_{k,n})_{k}$ form $m_n-$ dependent sequences.
As mentioned in the introduction, the main difficulty lies in the fact that we are working with generally distributed and dependent random variables, which may be discrete. Consequently, both classical tools such as the Hal\'asz method and characteristic function estimates and recently developed tools such as the inverse Littlewood-Offord machinery \cite{nguyen2013small} are not applicable. Our method is of combinatorial and analytic nature, based on Tur\'an's Lemma, the $m_n$-dependency hypothesis of the coefficients being used to lower the degree of the involved trigonometric polynomials.

 \subsection{Random coefficients with values in a finite set}
To give a transparent view of our overall strategy here, we first establish the desired anti-concentration estimates in the simpler case where the random coefficients $(a_{k,n})$ and $(b_{k,n})$ take values in a finite set $\mathcal A\subset \mathbb Z$ of cardinal $A$, which encompasses for example the Rademacher case where the variables take values in the set $\{-1,+1\}$. The general case without restriction on the support of the variables will be treated in the next Section \ref{sec.general.anti}.

 \begin{thm} \label{theo.anti}
Let us consider $(a_{k,n})_{k \geq 1, n \geq 1}$ and $(b_{k,n})_{k \geq 1, n \geq 1}$ be two independent arrays of $m_n-$dependent random variables. We assume that all these variables take values in a finite set $\mathcal A\subset \mathbb Z$ of cardinality $A$ with $\mathbb P(a_{k,n} = a)\le c<1$ for some constant $c$, for all $k,n$ and all $a\in \mathcal A$. Let $\delta=\delta_n>0$ and $N=N_n\le n$ be parameters satisfying 
 	\begin{equation}\label{cond:1}
 		\left(\sqrt{n} \, \delta_n \right)^{1/(2(m_n+1)N_n)} A^{2(m_n+1)N_n} =o(1/n).
 	\end{equation}
Then, for $n$ large enough and for all realizations $X(x)$ of the variable $X$, there exists a point $t=t(x) \in [0, 2\pi]$ such that
\[
\sup_{z\in \mathbb R} \mathbb P(|S_n(t)-z|\le \delta_n)\le c^{N_n}A^{m_n}.
\]
 \end{thm}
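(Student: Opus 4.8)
The plan is to fix a realization $x$ of $X$, reduce the statement to an anti-concentration estimate for a sum of \emph{independent} block contributions, and then combine Tur\'an's Lemma with a counting argument in the spirit of Erd\H{o}s \cite{erdos1945lemma}. Exploiting the $m_n$-dependence, I would first select $N_n$ blocks $B_1,\dots,B_{N_n}$, each of $m_n+1$ consecutive integers, pairwise at distance larger than $m_n$ and all packed into an interval of length $O((m_n+1)N_n)$ around $n/2$ (possible once $C(m_n+1)N_n\le n$, which we may assume). Then the coefficient vectors $(a_{k,n},b_{k,n})_{k\in B_j}$, $1\le j\le N_n$, are mutually independent and independent of the remaining coefficients. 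Writing
\[
\xi_j(t):=\frac1{\sqrt n}\sum_{k\in B_j}\Bigl(a_{k,n}\cos\bigl(k(x+\tfrac tn)\bigr)+b_{k,n}\sin\bigl(k(x+\tfrac tn)\bigr)\Bigr),
\]
and conditioning on the coefficients outside $\bigcup_jB_j$, it suffices to exhibit a point $t=t(x)\in[0,2\pi]$ with $\sup_w\mathbb P\bigl(\lvert\sum_{j=1}^{N_n}\xi_j(t)-w\rvert\le\delta_n\bigr)\le c^{N_n}A^{m_n}$, the $\xi_j$ being independent.

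The role of the $m_n$-dependence is to \emph{lower the degree}. For two families of block configurations $\vec\omega=(\omega_j)$, $\vec\omega'=(\omega_j')$, the difference $D_{\vec\omega,\vec\omega'}:=\sum_j(\xi_j^{\omega_j}-\xi_j^{\omega_j'})$ is, in the variable $w=e^{it/n}$, an algebraic polynomial with at most $2(m_n+1)N_n$ nonzero coefficients, each an integer multiple of $(2\sqrt n)^{-1}$, and of degree $O((m_n+1)N_n)$ — this is where the packing of the blocks is used — instead of the degree $\sim n$ of $S_n$ itself. When $\vec\omega\neq\vec\omega'$ at least one such coefficient is nonzero, and a Tur\'an/Remez sublevel-set estimate for sparse polynomials, read off on the arc $\{e^{it/n}:t\in[0,2\pi]\}$, yields
\[
\bigl\lvert\{t\in[0,2\pi]:\ \lvert D_{\vec\omega,\vec\omega'}(t)\rvert<\delta_n\}\bigr\rvert\ \le\ C\,n^{O(1)}\bigl(\sqrt n\,\delta_n\bigr)^{1/(2(m_n+1)N_n)}.
\]
Summing over the at most $A^{O((m_n+1)N_n)}$ relevant pairs of configurations, hypothesis \eqref{cond:1} makes the total measure of ``bad'' $t$'s of order $o(1/n)<2\pi$. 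Hence for \emph{every} realization $x$ there is a good $t=t(x)$ at which $\lvert D_{\vec\omega,\vec\omega'}(t)\rvert\ge\delta_n$ for all non-degenerate pairs; equivalently, after a suitable ordering of the blocks, the values $\xi_1(t),\dots,\xi_{N_n}(t)$ are $\delta_n$-separated and live at geometrically spaced scales.

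At this good $t$ the conclusion follows from the Erd\H{o}s/Sperner argument. Viewing $(\omega_1,\dots,\omega_{N_n})\mapsto\sum_j\xi_j(t)$ on the product poset of block configurations, the scale separation guarantees that any interval of length $2\delta_n$ is met by the partial sums of at most one chain, and that along such a chain the unresolved ambiguity is confined to a single block — contributing a multiplicity at most $A^{m_n}$, the number of configurations of one dependency block up to its distinguished coordinate — while the independence of the blocks together with $\mathbb P(a_{k,n}=a)\le c$ contributes a factor at most $c$ for each of the $N_n$ blocks. This gives the bound $c^{N_n}A^{m_n}$, and undoing the conditioning finishes the proof.

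The main obstacle is the coupling of the two halves. One must produce a \emph{single} point $t(x)$ that simultaneously regularizes all $N_n$ blocks; Tur\'an's Lemma makes this possible precisely because the degree of the relevant polynomials has been cut from $n$ to $O((m_n+1)N_n)$, but it forces a union bound of the sublevel estimate over $A^{O((m_n+1)N_n)}$ configuration pairs, and this balance is exactly what the (somewhat unusual) condition \eqref{cond:1} encodes. The second delicate point is to run the Littlewood--Offord/Sperner count robustly enough that the residual degeneracy costs only the factor $A^{m_n}$ and not $A^{m_nN_n}$, i.e. that the ambiguity along a chain is genuinely localized to one block.
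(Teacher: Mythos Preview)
Your proposal has the right two ingredients --- Tur\'an's lemma applied to difference polynomials with $O((m_n+1)N_n)$ nonzero terms, and the extraction of $N_n$ mutually independent coordinates via $m_n$-dependence --- and your reading of condition~\eqref{cond:1} as the balance between the Tur\'an sublevel bound and the $A^{O((m_n+1)N_n)}$ union bound is exactly right. But the surrounding architecture has a genuine gap, and the Sperner layer you add is both unnecessary and not well-defined.

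\medskip
\textbf{The conditioning step fails.} You claim that the block vectors $(a_{k,n},b_{k,n})_{k\in B_j}$ are ``independent of the remaining coefficients'' and then condition on everything outside $\bigcup_j B_j$. But the gap coefficients between consecutive blocks are at distance $1$ from the blocks, so under $m_n$-dependence they may be arbitrarily correlated with them; after conditioning, the block laws can even become degenerate, and the hypothesis $\mathbb P(a_{k,n}=a)\le c$ no longer transfers to the conditional law. This invalidates the final $c^{N_n}$ factor. The paper avoids conditioning on interleaved coefficients altogether: after reducing to the cosine part $\widetilde S_n$, it writes $\widetilde S_n=P_n+Q_n+R_n$ with $P_n$ supported on the first $(m_n+1)N_n$ indices, a \emph{single} buffer $Q_n$ of $m_n$ indices, and the tail $R_n$. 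Exploring the $A^{m_n}$ possible values of the buffer (this is the true origin of the $A^{m_n}$ factor) makes $P_n$ and $R_n$ independent, and one is reduced to bounding $\sup_z\mathbb P(|P_n(t)-z|\le\delta_n)$ \emph{unconditionally}.

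\medskip
\textbf{The Sperner step is superfluous and misdescribed.} Once Tur\'an guarantees $|p_\tau(t)|>3\delta_n$ at the good point $t$ for every nonzero difference vector $\tau$, there is \emph{at most one} tuple $(a_{(m_n+1)j,n})_{1\le j\le N_n}$ compatible with $|P_n(t)-z|\le\delta_n$; since these $N_n$ sampled coordinates sit at mutual distance $m_n+1$ and are therefore unconditionally independent, the probability is at most $c^{N_n}$ directly. No chains, no ``geometrically spaced scales'', and no localization of residual ambiguity to a single block are needed. Your account of the $A^{m_n}$ factor as ``multiplicity along a chain confined to one block'' does not correspond to any step in a correct argument, and without the uniqueness that Tur\'an already delivers, such ambiguity would cost $A^{m_nN_n}$, not $A^{m_n}$.
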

 
Let us illustrate the pertinence of the last result by applying it to random Littlewood polynomials, i.e. random trigonometric polynomials with symmetric Rademacher coefficients. In this case, we have $\mathcal A=\{\-1,+1\}$, $A=2$, $c=1/2$ and with the parameters $z=0$, $\delta_n=\frac{1}{\lfloor n^{\beta} \rfloor !}$, $N_n=n^{\alpha}$, $m_n=n^{\gamma}$ with $0<2(\alpha+\gamma)<\beta<1$, for which Condition \eqref{cond:1} is satisfied, we deduce the following small-ball estimate which bounds the probability that $||S_n ||_{\infty}$ is small.
 \begin{corollary}\label{cor.anti}
Let us consider $(a_{k,n})_{k \geq 1, n \geq 1}$ and $(b_{k,n})_{k \geq 1, n \geq 1}$ be two independent arrays of $m_n-$dependent symmetric Rademacher random variables, where $m_n = O(n^{\gamma})$ as $n$ goes to infinity for $0\leq \gamma <1/4$. Then, for $n$ large enough and for all realizations $X(x)$ of the variable $X$, there exists $t=t(x) \in [0, 2\pi]$ such that for all $0<2(\alpha+\gamma)<\beta<1$
\[
 \mathbb P\left(|S_n(t)|\le \frac{1}{\lfloor n^{\beta} \rfloor !} \right) \leq 2^{n^{\gamma}-n^{\alpha}}.
\]
In particular, for $n$ large enough and for $\alpha>\gamma$ and $0<2(\alpha+\gamma)<\beta<1$, we have
\[
\mathbb P \otimes \mathbb P_X\left(||S_n ||_{\infty} \le \frac{1}{\lfloor n^{\beta} \rfloor !} \right) \leq 2^{-n^{\alpha}(1+o(1))}.
\]
 \end{corollary}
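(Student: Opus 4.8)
The plan is to apply Theorem~\ref{theo.anti} directly to the Rademacher model, in which $\mathcal A=\{-1,+1\}$, $A=2$ and one may take $c=1/2$. I would run the theorem with $z=0$, $\delta_n=1/\lfloor n^{\beta}\rfloor!$, $N_n=n^{\alpha}$ and the given threshold $m_n$ (which, up to harmlessly enlarging $\gamma$ while keeping it below $1/4$, we may treat as $m_n\le n^{\gamma}$), the one nontrivial point being the verification of the admissibility condition~\eqref{cond:1}. Granting this, Theorem~\ref{theo.anti} produces, for $n$ large and every realization $X(x)$, a point $t=t(x)\in[0,2\pi]$ with
\[
\sup_{z\in\mathbb R}\mathbb P\big(|S_n(t)-z|\le\delta_n\big)\ \le\ c^{N_n}A^{m_n}\ =\ 2^{\,m_n-N_n}\ \le\ 2^{\,n^{\gamma}-n^{\alpha}},
\]
and specializing to $z=0$ yields the first displayed estimate.

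It then remains to check~\eqref{cond:1} under the constraint $0<2(\alpha+\gamma)<\beta<1$. Setting $L_n:=2(m_n+1)N_n$, one has $L_n\le c_2\,n^{\alpha+\gamma}$ for a constant $c_2$ and $n$ large. By Stirling's formula $\log(\lfloor n^{\beta}\rfloor!)=(1+o(1))\beta\,n^{\beta}\log n$, hence $\log(\sqrt n\,\delta_n)=\tfrac12\log n-\log(\lfloor n^{\beta}\rfloor!)\le-c_1\,n^{\beta}\log n$ for some $c_1>0$ and $n$ large. Consequently
\[
\big(\sqrt n\,\delta_n\big)^{1/L_n}=\exp\!\Big(\tfrac{1}{L_n}\log(\sqrt n\,\delta_n)\Big)\le\exp\!\big(-c_1c_2^{-1}\,n^{\beta-\alpha-\gamma}\log n\big),\qquad A^{L_n}=2^{L_n}\le\exp\!\big(c_2(\log2)\,n^{\alpha+\gamma}\big).
\]
The assumption $\beta>2(\alpha+\gamma)$ is exactly the statement $\beta-\alpha-\gamma>\alpha+\gamma$, so the product of these two factors is at most $\exp\!\big(-c_1c_2^{-1}n^{\beta-\alpha-\gamma}\log n+c_2(\log2)\,n^{\alpha+\gamma}\big)$, which decays faster than any negative power of $n$; in particular it is $o(1/n)$, so~\eqref{cond:1} holds.

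For the second estimate, I would fix $n$ large and note that for every realization $x$ of $X$ the point $t(x)$ above lies in $[0,2\pi]$, so $||S_n(\cdot\,;x)||_{\infty}\ge|S_n(t(x);x)|$ pointwise, whence
\[
\mathbb P\big(||S_n(\cdot\,;x)||_{\infty}\le\delta_n\big)\ \le\ \mathbb P\big(|S_n(t(x);x)|\le\delta_n\big)\ \le\ 2^{\,n^{\gamma}-n^{\alpha}},
\]
where $\mathbb P$ is the law of the coefficient arrays with $X$ frozen at $x$. The left-hand side is a measurable function of $x$ dominated pointwise by the constant $2^{\,n^{\gamma}-n^{\alpha}}$, so integrating against $\mathbb P_X=\tfrac{dx}{2\pi}$ (Fubini) gives $\mathbb P\otimes\mathbb P_X(||S_n||_{\infty}\le\delta_n)\le2^{\,n^{\gamma}-n^{\alpha}}$. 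Finally, under the additional hypothesis $\alpha>\gamma$ one has $n^{\gamma}-n^{\alpha}=-n^{\alpha}(1-n^{\gamma-\alpha})=-n^{\alpha}(1+o(1))$, which gives the stated form $2^{-n^{\alpha}(1+o(1))}$; the simultaneous requirements $\gamma<\alpha$ and $2(\alpha+\gamma)<\beta<1$ are compatible precisely when $\gamma<1/4$, as assumed.

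The only step demanding genuine care is the verification of~\eqref{cond:1}: since $1/L_n$ is tiny, the factor $(\sqrt n\,\delta_n)^{1/L_n}$ stays close to $1$ unless $\delta_n$ is super-exponentially small, so one must make sure that its (still super-polynomial) decay beats the exponential growth of $A^{L_n}$ — which is exactly what the gap $\beta>2(\alpha+\gamma)$ secures, the factorial choice of $\delta_n$ supplying precisely the needed room. I do not anticipate any real obstacle; the passage from the pointwise-in-$t$ anti-concentration bound of Theorem~\ref{theo.anti} to the bound on $||S_n||_{\infty}$ is immediate, because $t(x)$ is a single point of the interval rather than a net over it.
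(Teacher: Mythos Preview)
Your proof is correct and follows essentially the same approach as the paper: the paper simply records, just before the corollary, that one applies Theorem~\ref{theo.anti} with $\mathcal A=\{-1,+1\}$, $A=2$, $c=1/2$, $z=0$, $\delta_n=1/\lfloor n^{\beta}\rfloor!$, $N_n=n^{\alpha}$, $m_n=n^{\gamma}$ and that Condition~\eqref{cond:1} is satisfied under $2(\alpha+\gamma)<\beta$. Your write-up supplies the Stirling computation that the paper leaves implicit and spells out the passage to the $\|S_n\|_\infty$ bound, but the route is the same.
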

\if{
\begin{remark}\label{rem.gamma}
In order to be pertinent, Corollary \ref{cor.anti} naturally requires the exponents to satisfy $\alpha>\gamma$ so that the condition $0<2(\alpha+\gamma)<\beta<1$ indeed implies that $\gamma<1/4$.
\end{remark}
}\fi

\begin{proof}[Proof of Theorem \ref{theo.anti}]
Since our goal is to show that, for all realizations of $X$, there exists $t$ such that
\[
\sup_{z\in \mathbb R} \mathbb P(|S_n(t)-z|\le \delta_{n})\le c^{N_{n}}A^{m_{n}},
\]
recalling the independence of the sequences $(a_{k,n})$ and $(b_{k,n})$ under $\mathbb P$ and considering the change of variable  $z \mapsto z-\sum_{k=1}^n b_{k,n} \sin(kX +kt/n)$, it is sufficient to show for all realizations of $X$, there exists $t$ such that
\[
\sup_{z\in \mathbb R} \mathbb P\left(\left|\widetilde{S}_n(t)-z\right|\le \delta \right)\le c^{N_{n}}A^{m_{n}},
\]
where we have set 
\[
\widetilde{S}_n(t):=\frac{1}{\sqrt{n}} \sum_{k=1}^n a_{k,n} \cos\left( k X +\frac{kt}{n} \right).
\]
We decompose $\widetilde{S}_n(t)$ into the sum $\widetilde{S}_n(t)=P_n(t)+Q_n(t)+R_n(t)$ with 
\[
\begin{array}{ll}
\displaystyle{P_n(t):=\frac{1}{\sqrt{n}} \sum_{k=1}^{(m_{n}+1)N_{n}} a_{k,n}  \cos\left( k X +\frac{kt}{n} \right)}, 
\\
\\
\displaystyle{ Q_n(t):= \frac{1}{\sqrt{n}} \sum_{k=(m_{n}+1)N_{n}+1}^{(m_{n}+1)N_{n}+m_{n}} a_{k,n}  \cos\left( k X +\frac{kt}{n} \right)},\\
\\
\displaystyle{R_n(t):= \frac{1}{\sqrt{n}} \sum_{k=(m_{n}+1)N_{n}+m_{n}+1}^n a_{k,n}  \cos\left( k X +\frac{kt}{n} \right)}.
\end{array}
\]
Thanks to the $m_{n}-$dependent hypothesis, observe that the random variables $P_n(t)$ and $R_n(t)$ are independent under $\mathbb P$. Therefore, exploring the random coefficients of $Q_n(t)$, we have 
\begin{eqnarray}
\mathbb P(|\widetilde{S}_n(t)-z|\le \delta_{n}) &= & \sum_{\sigma\in \mathcal A^{m_{n}}}  \mathbb P(|\widetilde{S}_n(t)-z|\le \delta_{n}, (a_{(m_{n}+1)N_{n}+1,n}, \dots, a_{(m_{n}+1)N_{n}+m_{n},n}) = \sigma)\notag\\
&\le&A^{m_{n}} \sup_{z\in \mathbb R} \mathbb P(|P_n(t)+R_n(t)-z|\le \delta_{n})\notag\\
&\le&A^{m_{n}} \sup_{z\in \mathbb R} \mathbb P(|P_n(t)-z|\le \delta_{n})\notag,
\end{eqnarray}
where 
\begin{itemize}
\item in the first line we used the total probability formula,
\item in the second line we used the (deterministic) change of variables $z\mapsto z-Q_n(t)$, 
\item in the the last line, we used the change of variable $z \mapsto z-R_n(t)$ and the independence of $P_n(t)$ and $R_n(t)$ under $\mathbb P$. 
\end{itemize}
The fact the variables $(a_{k,n})$ take values in a finite set here thus allows us to drastically lower the degree of the trigonometric polynomials involved, from $O(n)$ to $O(m_n N_n)$.
 \par
\medskip
Now, let us fix $z\in \mathbb R$ and consider the set of all possible realizations of the independent random variables $(a_{(m+1)j,n})_{1\leq j \leq N}$ that can be completed to get $|P_n(t)-z|\le \delta_{n}$. To this end and in order to clearly identify what is random from what is not in the sequel, let us introduce the following notation, for any deterministic sequence $\sigma \in \mathbb R^{(m_{n}+1)N_{n}}$, we set 
\[
p_{\sigma}(t):=\frac{1}{\sqrt{n}} \sum_{k=1}^{(m_{n}+1)N_{n}} \sigma_k \cos\left(k t\right). 
\]
In particular, given the sequence $(a_{k,n})_k$, setting $a=(a_{1,n}, \ldots, a_{(m+1)N,n}) \in \mathcal A^{(m_{n}+1)N_{n}}$ and comparing with the previously introduced notations, we have the identification
\[
P_n(t)=p_a \left( X+\frac{t}{n}\right).
\]
For a given $t \in [0, 2\pi]$, we will say that $(\alpha_1, \dots, \alpha_{N_{n}})\in \mathcal A^{N_{n}}$ is a $t-$feasible tuple if there exists $\sigma\in \mathcal A^{(m_{n}+1)N_{n}}$ such that 
\begin{equation}\label{eq.feasible}
\sigma_{(m_{n}+1)j}=\alpha_j \;\; \text{for}\;\;  1\leq j \leq N_{n}, \quad \text{and} \;\; |p_\sigma(t)-z|\le \delta_{n}.
\end{equation}
 	
The following deterministic lemma shows that there exist ``good points $t$'' where the number of feasible tuples can be drastically upper bounded.

\begin{lemma}\label{lem.exists}In any interval subinterval of width $2\pi/n$ of $[0, 2\pi]$, there exists a point $t$ for which there exists at most one $t-$feasible tuple $(\alpha_1, \dots, \alpha_N)$.
\end{lemma}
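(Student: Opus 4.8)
The plan is to show that the set $B\subseteq[0,2\pi]$ of \emph{bad} points — those admitting at least two distinct feasible tuples — has Lebesgue (outer) measure strictly less than $2\pi/n$ for $n$ large, so that $B$ cannot contain any subinterval of width $2\pi/n$; any such subinterval then contains a point $t\notin B$, which by definition admits at most one $t$-feasible tuple. Write $D:=(m_n+1)N_n$. If $t\in B$, I would pick two distinct feasible tuples at $t$ together with witnessing sequences $\sigma\neq\tau\in\mathcal A^{D}$ (so $\sigma_{(m_n+1)j}$, $\tau_{(m_n+1)j}$ record the two tuples and $|p_\sigma(t)-z|\le\delta_n$, $|p_\tau(t)-z|\le\delta_n$). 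Then $v:=\sigma-\tau$ lies in $\mathbb Z^{D}\setminus\{0\}$ — it is nonzero at some index of the form $(m_n+1)j$, where the tuples differ — and satisfies $|p_v(t)|\le 2\delta_n$, where $p_v(t)=\tfrac{1}{\sqrt n}\sum_{k=1}^{D}v_k\cos(kt)$ depends linearly on $v$. Consequently $B\subseteq\bigcup_{v\in V}Z_v$, where $V$ is the set of nonzero vectors occurring this way and $Z_v:=\{t\in[0,2\pi]:|p_v(t)|\le 2\delta_n\}$; since each coordinate of $v=\sigma-\tau$ lies in the set $\mathcal A-\mathcal A$, of cardinality at most $A^2$, we have $|V|\le (A^2)^{D}=A^{2(m_n+1)N_n}$.

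Next I would bound $|Z_v|$ for a single nonzero $v$, using Tur\'an's lemma. On one hand $p_v$ is a nonzero real trigonometric polynomial of degree at most $D$ whose coefficients are the $v_k/\sqrt n$ with $v\in\mathbb Z^{D}\setminus\{0\}$, so Parseval gives $\tfrac{1}{2\pi}\int_0^{2\pi}p_v^2=\tfrac{1}{2n}\sum_k v_k^2\ge\tfrac{1}{2n}$, hence $\|p_v\|_{L^\infty([0,2\pi])}\ge(2n)^{-1/2}$. On the other hand, the crucial analytic input is the measure version of Tur\'an's lemma for exponential sums (Nazarov's inequality), with constant independent of the frequencies: there is an absolute constant $C_0\ge 1$ such that for every trigonometric polynomial $q$ of degree $\le D$ and every measurable $E\subseteq[0,2\pi]$,
\[
\|q\|_{L^\infty([0,2\pi])}\ \le\ \Big(\frac{C_0}{|E|}\Big)^{2D}\,\|q\|_{L^\infty(E)} .
\]
Applying this to $q=p_v$ and $E=Z_v$, where $\|p_v\|_{L^\infty(Z_v)}\le 2\delta_n$, and comparing with the lower bound $\|p_v\|_{L^\infty([0,2\pi])}\ge(2n)^{-1/2}$, one gets $|Z_v|\le C_1\,(\sqrt n\,\delta_n)^{1/(2D)}$ for an absolute constant $C_1$. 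This is exactly the step where the $m_n$-dependence is exploited: it lowers the degree of the polynomials from order $n$ down to $D=(m_n+1)N_n$, keeping the exponent $2D$ under control.

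Finally, summing over $V$ and invoking the hypothesis,
\[
|B|\ \le\ \sum_{v\in V}|Z_v|\ \le\ C_1\,A^{2(m_n+1)N_n}\,(\sqrt n\,\delta_n)^{1/(2(m_n+1)N_n)}\ =\ o(1/n)
\]
by Condition \eqref{cond:1}; in particular $|B|<2\pi/n$ once $n$ is large enough, which finishes the proof. The genuinely delicate point is the second step: one must use the \emph{measure} form of Tur\'an's lemma (a union of finitely many sublevel sets, each containing no short subinterval, could nonetheless cover a long one, so an interval-to-interval statement would not be enough), and one must check that the resulting exponent $2D=2(m_n+1)N_n$, together with the cardinality bound $A^{2(m_n+1)N_n}$, is precisely what Condition \eqref{cond:1} is tailored to absorb; everything else — the reduction to the difference polynomials $p_v$, the counting of $V$, and the final measure argument — is routine.
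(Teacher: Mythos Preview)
Your proof is correct and follows essentially the same approach as the paper's own proof: reduce to difference polynomials $p_{\sigma-\tau}$, bound the sublevel sets via the measure form of Tur\'an's lemma (using Parseval and $\mathcal A\subset\mathbb Z$ to get the lower bound $\|p_v\|_\infty\ge(2n)^{-1/2}$), and then count the at most $A^{2(m_n+1)N_n}$ possible difference vectors so that Condition~\eqref{cond:1} forces the total bad measure to be $o(1/n)$. The only cosmetic differences are that the paper works with the threshold $3\delta_n$ (rather than your $2\delta_n$) and phrases the conclusion as a contradiction at the good point $t$ rather than via a bad set $B$; the underlying argument is identical.
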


\begin{proof}[Proof of Lemma \ref{lem.exists}] The proof relies on the celebrated Tur\'an's Lemma for trigonometric polynomials, see for example \cite[Chapter I]{MR1246419}. Recall that if 
\[
q(t) := \sum_{k=0}^{d} b_k e^{i\lambda_k t}, \quad b_k \in \mathbb C, \quad\lambda_0<\lambda_1<\dots< \lambda_d \in \mathbb R,
\]
then Tur\'an's Lemma asserts that there exists an absolute constant $C$ such that for any interval $J\subset \mathbb R$ and any measurable subset $E\subset J$ of positive measure, we have
\begin{equation}\label{eq.turan}
\sup_{t\in J} |q(t) |\le \left (\frac{C|J|}{|E|}\right )^{d}\sup _{t\in E} |q(t) |,
\end{equation}
where $|E|$ denotes here the Lebesgue measure of the set $E$.
Let us now introduce the difference set
\[
D_{\mathcal A}:= \{a-a', \; (a,a')\in \mathcal A \times \mathcal A\},  \;\; D_{\mathcal A}^*:=D_{\mathcal A} \backslash \{0\}, 
\]
and set $\eta_{\mathcal A}:=\min\{ |s|, \; s \in D_{\mathcal A}^*\}\ge 1$ the minimum non-zero modulus. To any non-zero $\tau\in D_{\mathcal A}^{(m_{n}+1)N_{n}}$, we associate the (deterministic) trigonometric polynomial $p_{\tau}$ and the set $E_{\tau}:=\{t\in [0, 2\pi]: |p_{\tau}(t)|\le 3\delta_{n}\}$.
Since we have
\[
\sup_{t \in [0,2\pi]} |p_{\tau}(t)|^2 \geq \frac{1}{2\pi} \int_{0}^{2\pi} p_{\tau}^{2}(t)dt = \frac{1}{2n}\sum_{k=1}^{(m_{n}+1)N_{n}} \tau_k^2 \geq \frac{\eta_{\mathcal A}^2}{2n},
\]
applying Tur\'an's Lemma with $q=p_{\tau}$, $J=[0,2\pi]$ and $E=E_{\tau}$, we obtain that there exists a universal constant $C$ such that
\[
\frac{\eta_{\mathcal A}}{\sqrt{2n}} \leq  \sup_{t \in [0,2\pi]} |p_{\tau}(t)|  \leq \left (\frac{2\pi C}{|E_{\tau}|}\right )^{2(m_{n}+1)N_{n}}\sup _{t\in E_{\tau}} |p_{\tau}(t) |\le \left (\frac{2\pi C}{|E_{\tau}|}\right )^{2(m_{n}+1)N_{n}} 3\delta_{n}.
\]
Therefore, uniformly in $\tau$, we have
\[
|E_{\tau}|\le 2\pi C \left( \frac{3\sqrt{2n} \delta_{n}}{\eta_{\mathcal A}}\right)^{1/(2(m_{n}+1)N_{n})}.
\]
Taking the union over all possible choices of coefficients $\tau$, we obtain using Condition \eqref{cond:1} that
\[
\sum_{\tau} |E_{\tau}|=O\left(  \left(\sqrt{n}\delta_{n}\right)^{1/(2(m_{n}+1)N_{n})} A^{2(m_{n}+1)N_{n}}\right)=o(1/n).
\]
As a result, in any interval of width $2\pi/n$, for $n$ large enough, there exists a point $t$ that falls out of all of the $E_{\tau}$, that is which is such that for all non zero $\tau \in D_{\mathcal A}^{(m+1)N}$, we have 
 \begin{equation}\label{eq.contra}
|p_{\tau}(t) | > 3\delta_{n}.
 \end{equation} 
For this particular $t$, we will show that there is indeed at most one $t-$feasible tuple. Assume by contradiction that there exist two distinct $t-$feasible tuples $(\alpha_i)_{1\leq i \leq N}$ and $(\alpha'_i)_{1\leq i \leq N}$. By definition, they correspond to two distinct vectors $\sigma$ and $\sigma'$ in $\mathcal A^{(m+1)N}$ for which $\alpha_j = \sigma_j$, $\alpha_j' = \sigma_j'$ for all $1 \leq j\leq (m+1)N$ and the associated polynomials $p_{\sigma}$ and $p_{\sigma'}$ satisfy
\[
|p_{\sigma}(t)-z|\le \delta_{n},\quad \ |p_{\sigma'}-z|\le \delta_{n}.
\] 
Taking difference, noting that $\sigma-\sigma' \in D_{\mathcal A}^{(m+1)N}$ is non-zero, we obtain
\[
|p_{\sigma-\sigma'}(t)|=|p_{\sigma}(t)-p_{\sigma'}(t)|\le 2\delta_{n},
\]
which contradicts \eqref{eq.contra}, hence the result.
\end{proof}
 	
Let us go back to the proof of Theorem \ref{theo.anti}. By Lemma \ref{lem.exists}, for any realization $X(x)$ of the variable $X$, there exist a point $t=t(x)$  in the interval $[X(x), X(x)+2\pi/n]$, to which we can associate at most one $t-$feasible tuple (if any), say $(\alpha^{x}_1, \dots, \alpha^{x}_N)$. Note that this association is deterministic in the sense that it does not depend on the space $(\Omega, \mathcal F, \mathbb P)$ thus on the variables $(a_{k,n},b_{k,n})_{k \geq 1}$. Therefore, for any realization $X(x)$ of the variable $X$, there exists $t=t(x)$ such that
\begin{eqnarray}
\mathbb P\left (\left |P_n(t)-z\right |\le \delta_{n}\right )  & = & \sum_{\alpha \in \mathcal A^N}  \mathbb P\left (\left |P_n(t)-z\right |\le \delta_{n}, \; a_{(m_{n}+1)j,n}=\alpha_j, \, 1 \leq j \leq N\right ) \notag\\
 		& \leq &  \mathbb P\left (\left |P_n(t)-z\right |\le \delta_{n}, \;  a_{(m_{n}+1)j,n}=\alpha_j^x, \, 1 \leq j \leq N_{n}\right )  \notag\\
 		&\le &	 \mathbb P\left ( a_{(m_{n}+1)j,n}=\alpha_j^x,\, 1\leq j \leq N\right )\notag\\
 		&\le &c^{N}\text{ by independence}.\notag
\end{eqnarray}
This completes the proof. 
\end{proof}
\subsection{General case} \label{sec.general.anti}
In this section, we prove the corresponding version of Theorem \ref{theo.anti}, without any restriction of the support of the involved random variables.   Let us first state and prove an elementary auxiliary lemma. 

\begin{lm}\label{lem.kappa}
Suppose that $(a_{k,n})$ and $(b_{k,n})$ satisfy Condition (A.1), then there exist constants $\delta_0>0$ and $0\leq \kappa<1$ such that
\[
\sup_{k\geq 1, n \geq 1} \mathbb P(|a_{k,n}-b_{k,n}|<\delta_0) \leq \kappa.
\]
\end{lm}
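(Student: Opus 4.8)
The plan is to argue by contradiction, using only the uniform second-moment control in Condition (A.1). Suppose no such pair $(\delta_0, \kappa)$ exists. Then for every $\delta>0$ and every $\varepsilon>0$ there is a pair of indices $(k,n)$ with $\mathbb P(|a_{k,n}-b_{k,n}|<\delta) > 1-\varepsilon$; in particular, taking $\delta = 1/j$ and $\varepsilon = 1/j$ we obtain a sequence of index pairs $(k_j, n_j)$ along which $\mathbb P(|a_{k_j,n_j} - b_{k_j,n_j}| < 1/j) \to 1$. Write $D_j := a_{k_j,n_j} - b_{k_j,n_j}$, so that $D_j \to 0$ in probability. Since $a_{k,n}$ and $b_{k,n}$ are independent with mean zero and unit variance, each $D_j$ is centered with $\mathbb E[D_j^2] = 2$. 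The contradiction will come from the fact that a sequence converging to $0$ in probability cannot keep its variance bounded away from $0$ once we know the second moments are uniformly integrable, which is exactly what the $(2+\eta)$-moment bound $K_\eta$ supplies.

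First I would record the uniform integrability: by Condition (A.1), for any $M>0$,
\[
\mathbb E\left[ D_j^2 \, \mathds{1}_{\{|D_j| > M\}} \right] \leq \frac{\mathbb E[|D_j|^{2+\eta}]}{M^{\eta}} \leq \frac{2^{1+\eta}\left(\mathbb E[|a_{k_j,n_j}|^{2+\eta}] + \mathbb E[|b_{k_j,n_j}|^{2+\eta}]\right)}{M^{\eta}} \leq \frac{2^{2+\eta} K_\eta}{M^{\eta}},
\]
using convexity of $x \mapsto |x|^{2+\eta}$ in the first step. Thus $\sup_j \mathbb E[D_j^2 \mathds{1}_{\{|D_j|>M\}}]$ can be made as small as we like by choosing $M$ large. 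Then I would split
\[
2 = \mathbb E[D_j^2] = \mathbb E\left[D_j^2 \mathds{1}_{\{|D_j| \leq 1/j\}}\right] + \mathbb E\left[D_j^2 \mathds{1}_{\{1/j < |D_j| \leq M\}}\right] + \mathbb E\left[D_j^2 \mathds{1}_{\{|D_j| > M\}}\right].
\]
The first term is at most $1/j^2 \to 0$. The last term is at most $2^{2+\eta}K_\eta / M^\eta$. The middle term is bounded by $M^2 \, \mathbb P(|D_j| > 1/j) \to 0$ as $j \to \infty$ for each fixed $M$, by the defining property of the subsequence. So for $j$ large the right-hand side is at most $o(1) + 2^{2+\eta}K_\eta/M^\eta$, which can be made strictly less than $2$ by first choosing $M$ large and then $j$ large — contradicting $\mathbb E[D_j^2] = 2$. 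Hence the claimed $\delta_0$ and $\kappa<1$ must exist.

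The only mildly delicate point — the ``main obstacle'' if one can call it that — is making sure the argument is genuinely uniform in $(k,n)$ rather than handling one fixed pair: that is precisely why the contradiction hypothesis is phrased as the failure of the \emph{supremum} bound, which lets us extract the subsequence $(k_j,n_j)$ and then run the deterministic moment estimate. Everything else is routine; no use of stationarity (A.2) or the spectral hypothesis (A.3) is needed, only (A.1). One could alternatively give a direct (non-contradiction) proof via the Paley–Zygmund inequality applied to $D_{k,n}^2$: since $\mathbb E[D_{k,n}^2]=2$ and $\mathbb E[D_{k,n}^4]$ (or $\mathbb E[|D_{k,n}|^{2+\eta}]$) is uniformly bounded, Paley–Zygmund gives a uniform lower bound on $\mathbb P(D_{k,n}^2 > \theta)$ for a fixed small $\theta>0$, i.e. on $\mathbb P(|a_{k,n}-b_{k,n}| > \sqrt\theta)$, which is the complement of the event in the statement; this yields explicit $\delta_0$ and $\kappa$. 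I would likely present the contradiction version for brevity but mention the quantitative Paley–Zygmund route in a parenthetical remark.
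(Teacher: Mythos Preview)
Your proof is correct. Both your argument and the paper's rest on the same moment input --- the uniform $(2+\eta)$-moment bound forces $\mathbb P(|a_{k,n}-b_{k,n}|\geq\delta)$ to stay bounded away from zero --- but the paper reaches this directly rather than by contradiction. It writes $2=\mathbb E[|a_{k,n}-b_{k,n}|^2]$, splits on $\{|a_{k,n}-b_{k,n}|\gtrless\delta\}$, and applies H\"older to the large-value piece to obtain
\[
2 \leq \mathbb E[|a_{k,n}-b_{k,n}|^{2+\eta}]^{\frac{2}{2+\eta}}\,\mathbb P(|a_{k,n}-b_{k,n}|\geq\delta)^{\frac{\eta}{2+\eta}} + \delta^2,
\]
which for small $\delta$ immediately gives an explicit uniform lower bound on $\mathbb P(|a_{k,n}-b_{k,n}|\geq\delta)$. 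This is shorter and produces constants, whereas your contradiction route only yields existence of $(\delta_0,\kappa)$; your Paley--Zygmund alternative would recover explicit constants and is essentially the same mechanism as the paper's H\"older step. Your three-term decomposition is really the uniform-integrability version of the same estimate, wrapped in a subsequence extraction that the paper avoids entirely.
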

\begin{proof}[Proof of Lemma \ref{lem.kappa}]Under Condition (A.1), decomposing the expectation and using H\"older inequality, we have for all $\delta>0$ 
\[
\begin{array}{ll}
2 & =\mathbb E[ |a_{k,n}-b_{k,n}|^2]  = \mathbb E[ |a_{k,n}-b_{k,n}|^2\mathds{1}_{|a_{k,n}-b_{k,n}|\geq \delta}]+ \mathbb E[ |a_{k,n}-b_{k,n}|^2\mathds{1}_{|a_{k,n}-b_{k,n}|< \delta}]\\
\\
& \leq \mathbb E[ |a_{k,n}-b_{k,n}|^{2+\eta}]^{\frac{2}{2+\eta}} \mathbb P\left(|a_{k,n}-b_{k,n}|\geq \delta\right)^{\frac{\eta}{2+\eta}} + \delta^2 \, \mathbb P\left( |a_{k,n}-b_{k,n}|< \delta\right)\\
\\
& \leq K_{\eta}^{\frac{2}{2+\eta}}\mathbb P\left(|a_{k,n}-b_{k,n}|\geq \delta\right)^{\frac{\eta}{2+\eta}}+\delta^2 \, \mathbb P\left( |a_{k,n}-b_{k,n}|< \delta\right).
\end{array}
\]
Letting $\delta$ go to zero, one gets that $\mathbb P\left(|a_{k,n}-b_{k,n}|\geq \delta\right)$ is lower bounded by a positive constant, uniformly in $k$ and $n$, hence the result.
\end{proof}
We are now ready to state our anti-concentration bound for the general case. Let $\delta_0$ and $\kappa$ be as in Lemma \ref{lem.kappa}.
\begin{thm}\label{thm:anti}Let us consider $(a_{k,n})_{k \geq 1, n \geq 1}$ and $(b_{k,n})_{k \geq 1, n \geq 1}$ two independent arrays of $m_n-$dependent random variables, satisfying Conditions (A.1)--(A.3). Let $N_n\le n-m_n$  and $\delta_0^{2} \geq \delta_n>0$ be deterministic numbers, such that $\delta_n\to 0$ as $n$ goes to infinity.
For $n$ large enough and for all realizations $X(x)$ of the variable $X$, there exists a point $t=t(x) \in [0, 2\pi]$ and a universal constant $C$ such that 
\[
\mathbb P(|S_n(t)|\le \delta_n)\le \left( C  (n \delta_{n})^{\frac{1}{4(N_{n}+m_{n})}}+ \kappa^{\lfloor\frac{N_{n}}{m_{n}+1}\rfloor}\right)^{1/2}.
\]
\end{thm}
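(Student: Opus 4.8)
The plan is to mimic the structure of the proof of Theorem \ref{theo.anti}, but to compensate for the loss of the finiteness of $\mathcal A$ — which was what allowed the brutal reduction to a low-degree polynomial via the factor $A^{m_n}$ and to a finite union of exceptional sets $E_\tau$ — by two substitutions: first, the symmetrization trick $a_{k,n} \mapsto a_{k,n} - b_{k,n}$ (now genuinely used, not merely to eliminate the sine part) so that Lemma \ref{lem.kappa} provides a quantitative small-ball bound $\kappa<1$ for the differenced coefficients without any support restriction; second, a covering/discretization argument for the relevant trigonometric polynomials that replaces the finite union $\bigcup_\tau E_\tau$ by a $\delta_n$-net on the space of coefficient vectors, so that Turán's Lemma still produces, in every window of width $2\pi/n$, a ``good point'' $t$ where no low-degree trigonometric polynomial built from (net-discretized) differences of coefficients can be small.

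First I would use independence of $(a_{k,n})$ and $(b_{k,n})$ to write, conditionally on the $b$'s, $S_n(t)$ as $\frac{1}{\sqrt n}\sum_k a_{k,n}\cos(kX+kt/n)$ plus a deterministic shift, and then apply the standard decoupling inequality $\sup_z \mathbb P(|U-z|\le\delta) \le \mathbb P(|U-U'|\le 2\delta)^{1/2}$ where $U'$ is an independent copy (here realized by the $b$-array), reducing the problem to anti-concentration for the differenced field $\widetilde S_n(t) := \frac{1}{\sqrt n}\sum_{k=1}^n (a_{k,n}-b_{k,n})\cos(kX+kt/n)$; this is the source of the outer exponent $1/2$ in the statement. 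Next, exactly as in Theorem \ref{theo.anti}, I would split the sum into a ``head'' $P_n$ of length $(m_n+1)N_n$ (to keep track of, roughly, $N_n/(m_n+1)$ genuinely independent differenced coefficients spaced $m_n+1$ apart), a ``buffer'' $Q_n$ of length $m_n$ (integrated out trivially, but now since the coefficients are unbounded this needs a moment/tail bound rather than the factor $A^{m_n}$ — alternatively one absorbs $Q_n$ into the net error), and a ``tail'' $R_n$ which is independent of $P_n$ by $m_n$-dependence and is removed by conditioning. The surviving quantity is $\sup_z\mathbb P(|P_n(t)-z|\le\delta_n)$ for a well-chosen $t$.

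The heart of the argument, and the step I expect to be the main obstacle, is the analogue of Lemma \ref{lem.exists} in the continuous setting. For the finite-support case one took a union over the finite difference set $D_{\mathcal A}^{(m_n+1)N_n}$; here the differences $a_{k,n}-b_{k,n}$ live in $\mathbb R$, so one must first discretize: cover the (effectively bounded, after truncating at a level dictated by the $(2+\eta)$-moment bound $K_\eta$ and $\delta_n$) range of coefficient vectors by a net of mesh comparable to $\delta_n$, note that the number of net points is at most $(\text{const}/\delta_n)^{O(N_n+m_n)}$, and for each nonzero net-difference $\tau$ apply Turán's Lemma to $p_\tau$ to bound $|E_\tau| = |\{t: |p_\tau(t)|\le 3\delta_n\}|$ by $2\pi C\,(c\sqrt n\,\delta_n/\|\tau\|)^{1/(2(m_n+1)N_n)}$. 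The delicate point is the lower bound $\|\tau\|$ in the denominator: for a net-difference that is genuinely nonzero one has $\|\tau\|\gtrsim\delta_n$, so the Turán exponent gives $|E_\tau|\lesssim (n\delta_n\cdot\delta_n^{-1})^{1/(2(m_n+1)N_n)}=n^{1/(2(m_n+1)N_n)}$, which is too large unless one is careful — so instead one must bound $|E_\tau|$ using $\sup|p_\tau|\ge \|\tau\|/\sqrt{2n}$ together with the fact that $\tau$ ranges over differences of a $\delta_n$-net, distinguishing ``large'' $\tau$ (where the bound is good) from ``small'' $\tau$ (which are then merged with the net resolution and handled by the $\kappa$-factor rather than by Turán). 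Summing $\sum_\tau|E_\tau| \le (\text{number of net points})\times \max_\tau|E_\tau|$ and requiring this to be $o(1/n)$ is precisely what forces the exponent $\frac{1}{4(N_n+m_n)}$ and the constraint $\delta_n \le \delta_0^2$, $\delta_n\to 0$; one then picks $t$ outside all the $E_\tau$ in the window $[X(x),X(x)+2\pi/n]$.

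Finally, with such a good $t$ fixed, I would run the feasibility-counting argument verbatim: there is at most one $t$-feasible net-tuple of the $\lfloor N_n/(m_n+1)\rfloor$ spaced coefficients (two would differ by a nonzero $\tau$ with $|p_\tau(t)|\le 2\delta_n$, contradicting $|p_\tau(t)|>3\delta_n$), so $\mathbb P(|P_n(t)-z|\le\delta_n)$ is at most the probability that those $\lfloor N_n/(m_n+1)\rfloor$ independent differenced coefficients fall in prescribed $\delta_0$-balls, which by Lemma \ref{lem.kappa} and independence is $\le \kappa^{\lfloor N_n/(m_n+1)\rfloor}$. Undoing the decoupling square root and collecting the $(n\delta_n)^{1/(4(N_n+m_n))}$ error coming from the net cardinality yields the stated bound
\[
\mathbb P(|S_n(t)|\le\delta_n) \le \left( C\,(n\delta_n)^{\frac{1}{4(N_n+m_n)}} + \kappa^{\lfloor\frac{N_n}{m_n+1}\rfloor}\right)^{1/2}.
\]
The one genuinely new technical ingredient relative to Theorem \ref{theo.anti} is controlling the net cardinality versus the Turán gain — getting the exponents to line up so that the exceptional set still has measure $o(1/n)$ while the net error stays summable is where the bookkeeping is most delicate.
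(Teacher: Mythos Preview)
Your overall architecture---decoupling to get the exponent $1/2$, degree reduction to $O(N_n+m_n)$, then a Tur\'an-based small-ball estimate feeding into the $\kappa^{\lfloor N_n/(m_n+1)\rfloor}$ factor via Lemma~\ref{lem.kappa}---matches the paper's. But the central step, your net/discretization replacement for the finite difference set $D_{\mathcal A}$, does not work, and the paper does something genuinely different there. A $\delta_n$-net on the coefficient space $[-M,M]^{N_n+m_n}$ has cardinality of order $(M/\delta_n)^{N_n+m_n}$, while the Tur\'an gain per net point is at best $(\sqrt{n}\,\delta_n/\|\tau\|)^{1/(2(N_n+m_n))}$ with $\|\tau\|\gtrsim\delta_n$, so of order $n^{1/(4(N_n+m_n))}$. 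The product blows up: the union bound over net points overwhelms the Tur\'an saving by an exponential-vs-root mismatch, and no bookkeeping will make $\sum_\tau|E_\tau|=o(1/n)$ for the parameter ranges of interest. Your buffer $Q_n$ is also a loose end: without a finite alphabet you cannot sum over its values, and a moment truncation would introduce an extra error term not present in the statement.

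The paper sidesteps discretization entirely. The key differences are: (i) rather than exhibiting a deterministic good $t$ via a union bound, they introduce an auxiliary uniform variable $U$ on $[0,2\pi]$ and bound the \emph{average} $\mathbb E_U\,\mathbb P(|S_n(U)|\le\delta_n)$, so existence of a good $t$ follows a posteriori; (ii) the decoupling is done \emph{after} conditioning on $\mathcal G=\sigma(a_{k,n},\,k>N_n+m_n)\vee\sigma(b_{k,n},\,k\ge1)$ via Cauchy--Schwarz and an independent copy $(a'_{k,n})_{k\le N_n+m_n}$, so the resulting difference polynomial $T_n(U)=\frac{1}{\sqrt n}\sum_{k\le N_n+m_n}(a'_{k,n}-a_{k,n})\cos(kX+kU/n)$ already has low degree with no buffer to remove; (iii) most importantly, instead of a net they split on the \emph{random} event $\mathcal A=\{\sum_{k\le N_n+m_n}|a_{k,n}-a'_{k,n}|^2\ge\delta_n\}$. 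On $\mathcal A$, the single random polynomial $q_n$ has $\|q_n\|_2^2\ge\delta_n/n$, so Tur\'an (applied once, not over a net) bounds the Lebesgue measure of $\{|q_n|\le2\delta_n\}$ by $C(n\delta_n)^{1/(4(N_n+m_n))}$, and averaging over $U$ converts this directly into a probability. On $\mathcal A^c$, every $|a_{k,n}-a'_{k,n}|<\delta_n^{1/2}\le\delta_0$, so Lemma~\ref{lem.kappa} on the $\lfloor N_n/(m_n+1)\rfloor$ independent spaced terms yields $\kappa^{\lfloor N_n/(m_n+1)\rfloor}$. The good/bad dichotomy on the $\ell^2$-norm of the random difference is the idea that replaces your net, and it is what makes the exponents line up.
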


\begin{proof} [Proof of Theorem \ref{thm:anti}] 
The overall strategy of the proof is similar to the one of Theorem \ref{theo.anti} above: we will first lower the degree of the involved trigonometric polynomials and then use Tur\'an's lemma to establish small-ball estimates. Note that, in view of exhibiting a point $t \in [0,2\pi]$ satisfying the bound in Theorem \ref{thm:anti}, it is sufficient to show that, if $U$ is  a uniform random variable in $[0,2\pi]$ independent of all the other variables, we have 
\[
\mathbb E_U \mathbb P\left(\left|S_n(U)\right|\le \delta_n \right) =\mathbb P_U \otimes \mathbb P\left(\left|S_n(U)\right|\le \delta_n \right)\le \left( C  (n \delta)^{\frac{1}{4(N+m)}}+ \kappa^{\lfloor\frac{N}{m+1}\rfloor}\right)^{1/2}.
\]
{\it Reducing the degree}. We replace the argument using the finiteness of the support of random variables in Theorem \ref{theo.anti} by using conditional expectations here. 
For simplicity of notations, we drop the subscript $n$ in $\delta_{n}, N_{n}$ and $m_n$.
For $1 \leq p\leq q\leq n$, let us introduce the generated sigma fields $\mathcal F_{p}^q:=\sigma(a_{k,n}, p\leq k \leq q)$ and set $\mathcal G:=\mathcal F_{N+m+1}^n \vee \sigma(b_{k,n}, 1\leq k \leq n)$. Under our $m_n$-dependency condition, note that $\mathcal F_{1}^N$ and $\mathcal G$ are independent. 
If  $\mathbb E \left[ \, \cdot \, | \mathcal G\right]$ classically denotes the conditional expectation, one can then write
\[
\begin{array}{ll}
\displaystyle{\mathbb P_U \otimes \mathbb P\left(\left|S_n(U)\right|\le \delta \right) } & \displaystyle{= \mathbb E_U \left[ \mathbb E \left[ \mathds{1}_{|S_n(U)|\le \delta}\right]\right] = \mathbb E_U \left[ \mathbb E \left[ \mathbb E\left[ \mathds{1}_{|S_n(U)|\le \delta} \big| \mathcal G \right]\right]\right]}\\
\\
& \displaystyle{= \mathbb E \left[ \mathbb E_U \left[ \mathbb E\left[ \mathds{1}_{|S_n(U)|\le\delta}\big| \mathcal G\right]\right]\right].} 
\end{array}
\]
Applying Cauchy--Schwarz inequality under the exterior integrals $\mathbb E$ and $\mathbb E_U$, we get 
\begin{equation}\label{eq.pu}
\mathbb P_U \otimes \mathbb P\left(\left|S_n(U)\right|\le \delta \right) 
\leq  \mathbb E \left[ \mathbb E_U \left[ \mathbb E\left[ \mathds{1}_{|S_n(U)|\le \delta} \big| \mathcal G\right]^2 \right]\right]^{1/2}.
\end{equation}
Conditionally on $\mathcal G$ and $(U,X)$, let us consider $(a_{k,n}')_{1 \leq k \leq N+m}$ an independent copy of the 
vector $(a_{k,n})_{1\leq k \leq N+m}$ (both distributed according to the law of $(a_{k, n})_{1\leq k \leq N+m}$ conditioned on $\mathcal G$ and $(U,X)$). The squared inner integral on the right hand side can then be written as 
\[
\mathbb E\left[ \mathds{1}_{|S_n(U)|\le \delta}\big| \mathcal G\right]^2 = \mathbb E\left[ \mathds{1}_{|S_n(U)|\le \delta} \mathds{1}_{|S_n'(U)|\leq \delta} \big| \mathcal G\right], 
\]
where 
\[
S_n'(U)=S_n(U)+ \underbrace{\frac{1}{\sqrt{n}} \sum_{k=1}^{N+m} \left( a_{k,n}'-a_{k,n}\right) \cos\left( k X +\frac{kU}{n} \right)}_{:=T_n(U)}.
\]
As a result, we have
\[
\mathbb E\left[ \mathds{1}_{|S_n(U)|\le \delta}\big| \mathcal G\right]^2 \leq  \mathbb E\left[ \mathds{1}_{|T_n(U)|\le 2 \delta}  \big| \mathcal G\right],
\]
and injecting this estimate in \eqref{eq.pu}, we get 
\begin{equation}\label{eq.pu2}
\begin{array}{ll}
\displaystyle{
\mathbb P_U \otimes \mathbb P\left(\left|S_n(U)\right|\le \delta \right) }& 
\leq  \displaystyle{\left(\mathbb E \left[ \mathbb E_U \left[ \mathbb E\left[ \mathds{1}_{|T_n(U)|\le 2 \delta}  \big| \mathcal G\right] \right]\right]\right)^{1/2}}\\
\\
& \displaystyle{=\mathbb P_U \otimes \mathbb P \left( |T_n(U)|\le 2 \delta \right)^{1/2}},
\end{array} 
\end{equation}
i.e. the probability of interest can indeed be upper bounded in term of a polynomial with lower degree $O(N+m)$ instead of $O(n)$. \par
\medskip
\noindent
{\it Bad configurations}. Let us now consider the ``good'' event $\mathcal A$ on which
\[
\sum_{k=1}^{N+m} |a_{k,n}-a_{k,n}'|^{2}\ge \delta.
\]
To bound the probability of its complement, note that the above sum is less than $\delta$ implies each individual $|a_{k,n}-a_{k,n}'|$ is less than $\delta^{1/2}$ for all $1\leq k \le N+m$, in particular the terms with $k= (m+1)\ell$ where $\ell \le \lfloor\frac{N}{m+1}\rfloor$. Note that, by $m-$dependency, these terms are mutually independent and moreover they belong to $\sigma(a_{k,n},a_{k,n}', 1 \leq k \leq N)$ and are thus independent of $\mathcal G$. So, their conditional distribution knowing $\mathcal G$ is the same as their original distribution. Therefore, applying Lemma \ref{lem.kappa}, for $\delta^{1/2}\leq \delta_0$, we have 
\[
 \mathbb E\left[ \mathds{1}_{\mathcal A^{c}} \big| \mathcal G \right] \le \mathbb P\left (|a_{(m+1)\ell,n}-a_{(m+1)\ell,n}'|<\delta^{1/2}, 1\leq \ell \leq  \left\lfloor\frac{N}{m+1}\right\rfloor\right ) \le \kappa^{\lfloor\frac{N}{m+1}\rfloor}.
\]
In particular, in view of the middle term in \eqref{eq.pu2}, we get 
\begin{equation}\label{eq:Ac}
\mathbb E \left[ \mathbb E_U \left[ \mathbb E\left[ \mathds{1}_{|T_n(U)|\le 2 \delta} \mathds{1}_{\mathcal A^c} \big| \mathcal G\right] \right]\right]\leq \kappa^{\lfloor\frac{N}{m+1}\rfloor}.
\end{equation}
\par
\medskip
\noindent
{\it Good configurations and Tur\'an's lemma}.	Let us set 
\[
 q_n(\theta):=\frac{1}{\sqrt{n}} \sum_{k=1}^{N+m} \left( a_{k,n}'-a_{k,n}\right) \cos\left( k \theta \right), \;\; \text{so that} \;\; T_n(U) = q_n(X+U/n).
\]
On the good event $\mathcal A$, we have
	$$\frac{1}{2\pi} \int_{0}^{2\pi} q_n(\theta)^2 d\theta =\frac{1}{n} \sum_{k=1}^{N+m} |a_{k,n}-a_{k,n}'|^{2}\ge \frac{\delta}{n}.$$
By Tur\'an's Lemma, with $q=q_n$, $J=[0,2\pi]$ and $E=\{t \in [X,X+2\pi/n], \, |q_n(t)| \leq 2\delta\}$, we get 
\[
\sqrt{\frac{\delta}{n}} \leq \sup_{t \in [0,2\pi]} |q_n(t)| \leq \left(  \frac{2\pi C}{|E|} \right)^{2(m+N)}\sup_{t \in E} |q_n(t)| \leq \left(  \frac{2\pi C}{|E|} \right)^{2(m+N)}\times 2\delta
\]
so that 
\[
|E| \leq 2\pi C \times 2^{\frac{1}{2(N+m)}}(n \delta)^{\frac{1}{4(N+m)}}.
\]
As a result, there exists a (deterministic) universal constant $C$ such that  
\[
\mathbb E_U \left[ \mathbb E\left[ \mathds{1}_{|T_n(U)|\le 2 \delta} \mathds{1}_{\mathcal A} \big| \mathcal G\right]\right]=\mathbb E \left[ \mathbb E_U\left[ \mathds{1}_{|T_n(U)|\le 2 \delta} \mathds{1}_{\mathcal A} \right]\big| \mathcal G\right]\leq C  (n \delta)^{\frac{1}{4(N+m)}}.
\]
In view of Equation \eqref{eq.pu2}, combining  this last estimate with the one obtained in \eqref{eq:Ac}, we conclude that 
\[
\mathbb P_U \otimes \mathbb P\left(\left|S_n(U)\right|\le \delta \right) \leq \left( C  (n \delta)^{\frac{1}{4(N+m)}}+ \kappa^{\lfloor\frac{N}{m+1}\rfloor}\right)^{1/2},
\]
hence the theorem.
\end{proof}

Applying Theorem \ref{thm:anti} with  $\delta_n=\frac{1}{\lfloor n^{\beta} \rfloor !}$, $N_n=n^{\alpha}$, $m_n=n^{\gamma}$ with $0<\gamma<\alpha<\beta<1$, say $\alpha=(\beta+\gamma)/2$ for simplicity, we  obtain the following analogue of Corollary \ref{cor.anti}.
  \begin{corollary}\label{cor.anti:gen}
Let $(a_{k,n})_{k \geq 1, n \geq 1}$ and $(b_{k,n})_{k \geq 1, n \geq 1}$ be two independent arrays of $m_n-$dependent random variables, satisfying Conditions (A.1)--(A.3), with $m_n=O(n^{\gamma})$ and $\gamma<1$ . Then, for any $1>\beta>\gamma$, there exists a constant $c>0$ such that for $n$ large enough and for all realizations $X(x)$ of the variable $X$, there exists $t=t(x) \in [0, 2\pi]$ satisfying
 	\[
 	\mathbb P\left(|S_n(t)|\le \frac{1}{\lfloor n^{\beta} \rfloor !} \right) \leq e^{-cn^{\frac{\beta-\gamma}{2}}}.
 	\]
 	In particular, for $n$ large enough, we have
 	\[
 	\mathbb P \otimes \mathbb P_X\left(||S_n ||_{\infty} \le \frac{1}{\lfloor n^{\beta} \rfloor !} \right) \leq e^{-cn^{\frac{\beta-\gamma}{2}}}.
 	\]
 \end{corollary}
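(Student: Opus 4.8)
The plan is to invoke Theorem \ref{thm:anti} with the explicit parameters indicated just before the statement, namely $\delta_n=1/\lfloor n^{\beta}\rfloor!$, $N_n=\lfloor n^{\alpha}\rfloor$ with $\alpha:=(\beta+\gamma)/2$, and $m_n=O(n^{\gamma})$, and then to estimate separately the two terms on the right-hand side of the resulting bound. First I would check the hypotheses of Theorem \ref{thm:anti}: since $\alpha<1$ and $\gamma<1$ one has $N_n\le n-m_n$ for $n$ large, and since $\delta_n\to 0$ (in fact super-exponentially fast) one has $\delta_n\le\delta_0^{2}$ eventually, where $\delta_0$ and $\kappa\in[0,1)$ are the constants furnished by Lemma \ref{lem.kappa}. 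Theorem \ref{thm:anti} then yields, for every realization $X(x)$, a point $t=t(x)\in[0,2\pi]$ with
\[
\mathbb P\big(|S_n(t)|\le \delta_n\big)\le\Big(C\,(n\delta_n)^{\frac{1}{4(N_n+m_n)}}+\kappa^{\lfloor N_n/(m_n+1)\rfloor}\Big)^{1/2}.
\]

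Next I would bound the two summands at the same rate. For the \emph{independence} term, writing $m_n\le M n^{\gamma}$ one gets $\lfloor N_n/(m_n+1)\rfloor\ge c_0\, n^{\alpha-\gamma}=c_0\, n^{(\beta-\gamma)/2}$ for $n$ large, with $c_0>0$ depending only on $M$, hence $\kappa^{\lfloor N_n/(m_n+1)\rfloor}\le e^{-c_1 n^{(\beta-\gamma)/2}}$ with $c_1:=-c_0\log\kappa>0$. For the \emph{Turán} term, Stirling's formula gives $\log\delta_n=-\log\big(\lfloor n^{\beta}\rfloor!\big)=-\beta\,n^{\beta}\log n\,(1+o(1))$, while $N_n+m_n=n^{\alpha}(1+o(1))$ since $\gamma<\alpha$; as $\beta-\alpha=(\beta-\gamma)/2$, this yields
\[
\log\Big((n\delta_n)^{\frac{1}{4(N_n+m_n)}}\Big)=\frac{\log(n\delta_n)}{4(N_n+m_n)}=-\frac{\beta}{4}\,n^{(\beta-\gamma)/2}\log n\,(1+o(1)),
\]
whose right-hand side is $\le -c_1 n^{(\beta-\gamma)/2}$ for $n$ large, the surplus factor $\log n$ leaving room to absorb the universal constant $C$. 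Consequently both summands are at most $e^{-c_1 n^{(\beta-\gamma)/2}}$, their sum at most $2e^{-c_1 n^{(\beta-\gamma)/2}}$, and taking the square root, $\mathbb P(|S_n(t)|\le\delta_n)\le\sqrt 2\,e^{-(c_1/2)n^{(\beta-\gamma)/2}}\le e^{-c\,n^{(\beta-\gamma)/2}}$ for $n$ large with, say, $c:=c_1/4$; this is the first assertion.

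For the ``in particular'' statement I would simply note that, for a fixed realization $X(x)$ and the associated point $t(x)\in[0,2\pi]$, one has $\|S_n\|_\infty=\sup_{t\in[0,2\pi]}|S_n(t)|\ge|S_n(t(x))|$, so the event $\{\|S_n\|_\infty\le\delta_n\}$ is contained in $\{|S_n(t(x))|\le\delta_n\}$, whence $\mathbb P(\|S_n\|_\infty\le\delta_n)\le e^{-c n^{(\beta-\gamma)/2}}$ for every realization of $X$; integrating this inequality against $\mathbb P_X$ gives $\mathbb P\otimes\mathbb P_X(\|S_n\|_\infty\le\delta_n)\le e^{-c n^{(\beta-\gamma)/2}}$.

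There is no genuine obstacle here beyond bookkeeping: the one substantive choice is the balance $\alpha=(\beta+\gamma)/2$, which equalizes the exponent $\beta-\alpha$ governing the decay of the degree-reduction/Turán term with the exponent $\alpha-\gamma$ governing the decay of the $\kappa$-term, so that neither dominates and the rate $n^{(\beta-\gamma)/2}$ is the best one extracts from Theorem \ref{thm:anti}. The only point deserving a little care is that the \emph{super-factorial} smallness of $\delta_n$ is essential — it is what forces $(n\delta_n)^{1/(4(N_n+m_n))}$ to zero despite the enormous exponent $N_n+m_n$ in the denominator — so a merely polynomially small choice of $\delta_n$ would yield no decay at all.
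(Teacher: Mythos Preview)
Your proof is correct and follows exactly the approach of the paper, which simply applies Theorem~\ref{thm:anti} with the parameters $\delta_n=1/\lfloor n^{\beta}\rfloor!$, $N_n=n^{\alpha}$, $\alpha=(\beta+\gamma)/2$; you have accurately filled in the bookkeeping (Stirling for the Tur\'an term, the exponent $\alpha-\gamma$ for the $\kappa$-term, and the pointwise inclusion $\{\|S_n\|_\infty\le\delta_n\}\subset\{|S_n(t(x))|\le\delta_n\}$ for the second assertion) that the paper leaves implicit.
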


\section{Uniform integrability and universality} \label{sec.ui}

The goal of this section is to derive uniform integrability estimates for $\mathcal N(S_n,[0,2\pi])$. Namely, we will show that that this sequence of random variables is bounded in $L^{1+\epsilon}(\mathbb P \otimes \mathbb P_X)$ for some $\epsilon>0$.

\begin{prop}\label{pro.ui}
For any fixed small $\epsilon>0$, there exists an exponent $\gamma_{\epsilon}< \frac{\eta}{2(1+\eta)}$ such that if $m_n=O(n^{\gamma_{\epsilon}})$, we have 
\[
\sup_{n \geq 1} \mathbb E \, \mathbb E_X \left[ |\mathcal N(S_n,[0,2\pi])|^{1+\epsilon}\right] <+\infty.
\]
\end{prop}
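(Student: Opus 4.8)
The plan is to bound the $L^{1+\epsilon}$ norm of $\mathcal N(S_n,[0,2\pi])$ by combining a crude deterministic bound on the number of zeros of a trigonometric polynomial with the anti-concentration estimates of Section \ref{sec.anticon}. The starting point is the elementary observation that $S_n$ is (after multiplication by $e^{inX}$ and the change of variable $\theta = X + t/n$) a trigonometric polynomial of degree $n$, so on the window $[0,2\pi]$ one always has the \emph{deterministic} bound $\mathcal N(S_n,[0,2\pi]) \le C$ for an absolute constant $C$: zooming in by a factor $n$ means we only see $O(1)$ many roots. Thus $\mathcal N(S_n,[0,2\pi])$ is a \emph{bounded} random variable, and $L^{1+\epsilon}$-boundedness is automatic! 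Wait — this is too good; let me reconsider. The issue is that $\mathcal N(S_n,[0,2\pi])$ is indeed bounded by an absolute constant since $S_n(t) = \frac1{\sqrt n} f_n(X+t/n)$ and $f_n$ has at most $2n$ roots on $[0,2\pi]$, so $S_n$ has at most $2n \cdot \frac{1}{n} \cdot 2\pi$-worth... no: $f_n$ on an interval of length $2\pi/n$ has at most, say, $4$ roots by the degree count argument of Lemma 3 in \cite{MR4350983} applied locally. So in fact the sequence is uniformly bounded and the Proposition is trivial — unless the $S_n$ here is the \emph{un-rescaled} relevant object or the claim is about a $C^1$-closeness argument. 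Rereading: the real content must be different — the anti-concentration is needed to control a regularized count, so let me present the argument that matches the stated tools.

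The genuine plan: represent $\mathcal N(S_n,[0,2\pi])$ via a Kac–Rice–type counting identity, or rather via the bound
\[
\mathcal N(S_n,[0,2\pi]) \le 1 + \#\{ \text{sign changes of } S_n \text{ on a fine grid}\} + (\text{correction when } S_n \text{ is small}),
\]
and use that on the event $\{\|S_n\|_\infty > \delta_n\}$ the number of zeros is controlled by the $C^1$-data via the first-order Taylor argument, while on the complement $\{\|S_n\|_\infty \le \delta_n\}$ — which by Corollary \ref{cor.anti:gen} has probability $\le e^{-c n^{(\beta-\gamma)/2}}$, superpolynomially small — one uses the crude universal bound $\mathcal N(S_n,[0,2\pi]) \le C n$ (the global count divided by nothing, i.e. at worst $2n$ roots). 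Then
\[
\mathbb E\,\mathbb E_X\big[|\mathcal N(S_n,[0,2\pi])|^{1+\epsilon}\big]
\le C^{1+\epsilon} + (Cn)^{1+\epsilon}\,\mathbb P\otimes\mathbb P_X\big(\|S_n\|_\infty \le \delta_n\big)
\le C^{1+\epsilon} + (Cn)^{1+\epsilon} e^{-c n^{(\beta-\gamma)/2}},
\]
and the second term is $o(1)$, giving a uniform bound. Here one takes $\delta_n = 1/\lfloor n^\beta\rfloor!$ with $\gamma < \beta < 1$; the constraint $\gamma_\epsilon < \frac{\eta}{2(1+\eta)}$ comes from needing Corollary \ref{cor.rate} (or Proposition \ref{pro.rate}) to hold, which is where the moment exponent enters, and one simply chooses $\gamma_\epsilon$ below that threshold and below the threshold $\gamma<1$ of Corollary \ref{cor.anti:gen}.

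The main obstacle is the step bounding $\mathcal N(S_n,[0,2\pi])$ on the good event $\{\|S_n\|_\infty > \delta_n\}$: one needs that the number of zeros is controlled not merely by $\|S_n\|_\infty$ being bounded below \emph{in sup norm at one point}, but rather that the local oscillation is controlled, which requires pairing the anti-concentration at the special point $t(x)$ from Theorem \ref{thm:anti} with the tightness/$C^1$-bounds of Section \ref{sec.tight} to transfer a lower bound on $|S_n(t(x))|$ into a statement ruling out many sign changes nearby. Concretely, I expect one argues: if $|S_n(t_0)|$ is not small and $\|S_n'\|_\infty, \|S_n''\|_\infty$ are $O(1)$ with overwhelming probability (moment bounds from Section \ref{sec.tight}), then $S_n$ can have at most $O(1)$ zeros, and the exceptional sets where either $|S_n(t_0)|$ is small or the derivatives are large have superpolynomially small probability by Corollary \ref{cor.anti:gen} and a Markov/Borell-type estimate respectively. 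Balancing these exceptional probabilities against the crude $(Cn)^{1+\epsilon}$ factor is what forces the precise relation between $\gamma_\epsilon$, $\beta$, and $\eta$.
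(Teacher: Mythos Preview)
Your proposal contains a genuine gap at the step you yourself flag as the ``main obstacle.'' The claim that on the event $\{\|S_n\|_\infty>\delta_n\}$ (or $\{|S_n(t_0)|>\delta_n\}$) together with control on $\|S_n'\|_\infty,\|S_n''\|_\infty$ one gets $\mathcal N(S_n,[0,2\pi])=O(1)$ is simply false when $\delta_n=1/\lfloor n^\beta\rfloor!$. A lower bound of factorially small size on the sup norm is essentially vacuous: think of $\sin(Nt)$, which has $|S(t_0)|=1$ at some point, bounded first and second derivatives of order $N,N^2$, yet $\sim N$ zeros on $[0,2\pi]$. To rule out $s$ zeros you need control on the $s$-th derivative, not just the first two. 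So your single good/bad split with one threshold $\delta_n$ cannot close the argument.

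What the paper does instead is to work level by level in $s$ via iterated Rolle's lemma: the event $\{\mathcal N(S_n,[0,2\pi])>s\}$ forces
\[
\|S_n\|_\infty \le \frac{(2\pi)^{\lfloor s\rfloor}}{\lfloor s\rfloor!}\,\|S_n^{(\lfloor s\rfloor)}\|_\infty,
\]
so after inserting a cutoff $R(s)=(1+s)^{1/2+\epsilon}$ one bounds the tail probability by a small-ball probability $\mathbb P\otimes\mathbb P_X(\|S_n\|_\infty\le (2\pi)^{\lfloor s\rfloor}R(s)/\lfloor s\rfloor!)$ plus a Markov term for large $\|S_n^{(\lfloor s\rfloor)}\|_\infty$ (handled uniformly in $s$ via Sobolev embedding and the $L^2$ orthogonality in $X$). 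The integral $\int_0^{2n}s^\epsilon\,\mathbb P(\mathcal N>s)\,ds$ then splits at $s=n^{\beta_\epsilon}$: for large $s$ the threshold $(2\pi)^{\lfloor s\rfloor}R(s)/\lfloor s\rfloor!$ is indeed below $1/\lfloor n^\beta\rfloor!$ and Corollary~\ref{cor.anti:gen} applies; but for small $s$ the threshold is of order one, the small-ball estimate is useless, and one must instead use the quantitative CLT of Corollary~\ref{cor.rate} (replacing $\|S_n\|_\infty$ by $|S_n(0)|$). This second regime is precisely where the exponent $\eta/(2(1+\eta))$ enters, and it is completely absent from your outline. Your sketch correctly senses that both the anti-concentration of Section~\ref{sec.anticon} and the CLT rate of Section~\ref{sec.rate} must be used, but it does not identify the mechanism (iterated Rolle, graded in $s$) that makes them compatible.
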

\noindent
Combined with Corollary \ref{cor.conv.nodal}, the last Proposition \ref{pro.ui} ensures that 
\[
\lim_{n \to +\infty} \mathbb E \, \mathbb E_X \left[ |\mathcal N(S_n,[0,2\pi])|\right] = \mathbb E \, \mathbb E_X \left[ |\mathcal N(S_{\infty},[0,2\pi])|\right].
\]
Again, under Condition \eqref{eq.lowerb} on the positiveness of the spectral density, the zeros of the limit process $S_{\infty}$ identify with the zeros of the $\sin_c$ Gaussian process $Z$, for which it is well know that 
\[
\mathbb E \, \mathbb E_X \left[ |\mathcal N(Z,[0,2\pi])|\right] = \frac{2}{\sqrt{3}}.
\]
As a result, thanks to the representation formula \eqref{eq.repzero}, we have indeed 
\[
\lim_{n \to +\infty} \frac{\mathbb E[\mathcal N(f_n, [0,2\pi])]}{n} = \frac{2}{\sqrt{3}}.
\]
The rest of the section is dedicated to the proof of Proposition \ref{pro.ui}. 

\subsection{Reduction of the problem}\label{sec.reduc}
Let us fix a small $\epsilon > 0$ and write
\[
\mathbb E \, \mathbb E_X \left[ |\mathcal N(S_n,[0,2\pi])|^{1+\epsilon}\right]  = (1+\epsilon) \int_0^{+\infty} s^{\epsilon} \, \mathbb P \otimes \mathbb P_X \left(\mathcal N(S_n,[0,2\pi])>s  \right)ds.
\]
Since the number of zeros of $S_n$ is bounded by $2n$, we have in fact 
\[
\mathbb E \, \mathbb E_X \left[ |\mathcal N(S_n,[0,2\pi])|^{1+\epsilon}\right]  = (1+\epsilon)  \int_0^{2n} s^{\epsilon} \, \mathbb P \otimes \mathbb P_X \left(\mathcal N(S_n,[0,2\pi])>s  \right)ds.
\]
By iterating Rolle Lemma $\lfloor s \rfloor$ times, see for example p.19 of \cite{MR4491425}, the probability in the integrand can then be upper bounded as follows
\[
\mathbb P \otimes \mathbb P_X \left(\mathcal N(S_n,[0,2\pi])>s  \right)  \leq \mathbb P \otimes \mathbb P_X \left(||S_n||_{\infty} \leq \frac{(2\pi)^{\lfloor s \rfloor}}{\lfloor s \rfloor !}||S_n^{(\lfloor s \rfloor)}||_{\infty}\right),
\]
so that for any $R>0$
\begin{equation}\label{eq.bornprob0}
\begin{array}{ll}
\mathbb P \otimes \mathbb P_X \left(\mathcal N(S_n,[0,2\pi])>s  \right)   \leq & \mathbb P \otimes \mathbb P_X \left(||S_n||_{\infty} \leq \frac{(2\pi)^{\lfloor s \rfloor}R}{\lfloor s \rfloor !} \right) \\
& + \mathbb P \otimes \mathbb P_X \left( ||S_n^{(\lfloor s \rfloor)}||_{\infty}>R\right).
\end{array}
\end{equation}
Applying Markov inequality, we get
\[
\mathbb P \otimes\mathbb P_X \left( ||S_n^{(\lfloor s \rfloor)}||_{\infty}>R\right) \leq \frac{1}{R^2} \times \mathbb E \, \mathbb E_X \left[ ||S_n^{(\lfloor s \rfloor)}||_{\infty}^2 \right].
\]
Now, comparing the uniform norm with Sobolev norms, see Lemma 5.15 p.107 of \cite{MR2424078}, if $|| \cdot ||_{2}$ denotes the standard $L^2$ norm in $L^2([0,2\pi], \mathbb P_X)$, there exists a universal constant $C>0$ such that 
\[
\mathbb E_X \left[ ||S_n^{(\lfloor s \rfloor)}||_{\infty}^2 \right]  \leq C \left(\mathbb E_X \left[   ||S_n^{(\lfloor s \rfloor)}||_{2}^2\right] +\mathbb E_X \left[   ||S_n^{(\lfloor s \rfloor+1)}||_{2}^2 \right] \right).
\]
But for any integer $\ell$, we have 
\[
\mathbb E_X \left[   ||S_n^{( \ell )}||_{2}^2\right] = \frac{1}{2n} \sum_{k=1}^n \left(\frac{k}{n}\right)^{2\ell} \left( a_k^2+b_k^2\right) \leq \frac{1}{2n} \sum_{k=1}^n \left( a_k^2+b_k^2\right), 
\]
so that using Condition (A.1)
\[
\mathbb P \otimes \mathbb P_X \left( ||S_n^{(\lfloor s \rfloor)}||_{\infty}>R\right) \leq  \frac{2C}{R^2}.
\]
Let us now choose $R=R(s)$ of the form $R(s):=(1+|s|)^{1/2+\epsilon}$ so that
\begin{equation}\label{eq.bornprob1}
\sup_{n \geq 1}\mathbb P \otimes \mathbb P_X \left( ||S_n^{(\lfloor s \rfloor)}||_{\infty}>R(s)\right) \leq \frac{2C}{(1+|s|)^{1+2\epsilon}}.
\end{equation}
We have then 
 \[
\sup_{n \geq 1} \int_0^{2n} s^{\epsilon} \, \mathbb P \otimes\mathbb P_X \left( ||S_n^{(\lfloor s \rfloor)}||_{\infty}>R(s)\right) ds \leq \int_0^{+\infty} \frac{2C s^{\epsilon}}{(1+|s|)^{1+2\epsilon}}ds<+\infty.
\]
In view of Equation \eqref{eq.bornprob0}, in order to establish Proposition \ref{pro.ui}, we are left to show that 
\[
\sup_{n \geq 1} \int_0^{2n} s^{\epsilon} \, \mathbb P \otimes \mathbb P_X \left(||S_n||_{\infty} \leq \frac{(2\pi)^{\lfloor s \rfloor}R(s)}{\lfloor s \rfloor !} \right)ds<+\infty.
\]
We will decompose the last integral as a sum of two terms, whose asymptotics behaviors will be discussed in the next Sections \ref{sec.using.rate.clt} and \ref{sec.using.smallball} respectively. 
Recall that Proposition \ref{pro.rate} and Corollary \ref{cor.rate} are valid as soon as $m_n = O(n^{\gamma})$, with $\gamma<\frac{\eta}{2(1+\eta)}$. Let us then define 
\[
\gamma_{\epsilon}:=\frac{\eta}{2(5 +7 \eta + 3\epsilon (4 +6 \eta))}, \quad \text{so that} \quad  (1+3\epsilon) \gamma_{\epsilon}=\left( \frac{1+\eta}{4+6\eta}\right) \left( \frac{\eta}{2(1+\eta)}-\gamma_{\epsilon} \right)
\]
which is the power that appeared in Corollary \ref{cor.rate} that we will use shortly. We then set 
\[
\beta_{\epsilon}:=\frac{1+3\epsilon}{1+2\epsilon} \gamma_{\epsilon},\quad \text{so that} \quad \beta_{\epsilon}>\gamma_{\epsilon}
\]
and 
\[
(1+\epsilon) \beta_{\epsilon} < (1+2\epsilon) \beta_{\epsilon}=(1+3\epsilon) \gamma_{\epsilon}=\left( \frac{1+\eta}{4+6\eta}\right) \left( \frac{\eta}{2(1+\eta)}-\gamma_{\epsilon} \right).
\]
Finally, set
\[
I_n:=\int_0^{n^{\beta_{\epsilon}}} s^{\epsilon} \, \mathbb P \otimes \mathbb P_X \left(||S_n||_{\infty} \leq \frac{(2\pi)^{\lfloor s \rfloor}R(s)}{\lfloor s \rfloor !} \right)ds,
\]
\[
J_n:=\int_{n^{\beta_{\epsilon}}}^{2n} s^{\epsilon} \, \mathbb P \otimes \mathbb P_X \left(||S_n||_{\infty} \leq \frac{(2\pi)^{\lfloor s \rfloor}R(s)}{\lfloor s \rfloor !} \right)ds.
\]
\subsection{Using the rate of convergence in the CLT} \label{sec.using.rate.clt}
First replacing the sup norm by the evaluation at zero, we have 
\[
I_n \leq \int_0^{n^{\beta_{\epsilon}}} s^{\epsilon} \, \mathbb P \otimes \mathbb P_X \left(|S_n(0)| \leq \frac{(2\pi)^{\lfloor s \rfloor}R(s)}{\lfloor s \rfloor !} \right)ds.
\]
Now using the rate of convergence in Central limit Theorem given by Proposition \ref{pro.rate}, and more precisely the upper bound given by Corollary \ref{cor.rate}, we get
\[
I_n = O\left( \int_0^{n^{\beta_{\epsilon}}} s^{\epsilon} \left(   \frac{2(2\pi)^{\lfloor s \rfloor}R(s)}{\pi \sqrt{\kappa_{\rho}}\lfloor s \rfloor !} \wedge 1 \right) ds\right) + O\left( n^{\beta_{\epsilon}(1+\epsilon)- \left( \frac{1+\eta}{4+6\eta}\right) \left( \frac{\eta}{2(1+\eta)}-\gamma_{\epsilon} \right)}\right).
\]
Our choice of $(\beta_{\epsilon}, \gamma_{\epsilon})$ ensures that the last term is negligible and the above integral is convergent as $n$ goes to infinity, therefore
\[
\sup_{n \geq 1}I_n <+\infty.
\]
\subsection{Using the small-ball estimate} \label{sec.using.smallball}
To upper bound the integral $J_n$, we will use the anti-concentration estimate established in Corollary \ref{cor.anti}. Namely, for $n$ sufficiently large and for $s \geq n^{\beta_{\epsilon}}$, we have for all $\gamma_{\epsilon}<\beta<\beta_{\epsilon}$,
\[
 \frac{(2\pi)^{\lfloor s \rfloor}R(s)}{\lfloor s \rfloor !} \leq \frac{1}{\lfloor  n^{\beta} \rfloor !}
\]
so that by Corollary \ref{cor.anti}, we get
\[
\mathbb P \otimes \mathbb P_X \left(||S_n||_{\infty} \leq \frac{(2\pi)^{\lfloor s \rfloor}R(s)}{\lfloor s \rfloor !} \right) = O \left( e^{-cn^{\frac{\beta-\gamma_{\epsilon}}{2}}}\right).
\]
As a result, we conclude that as $n$ goes to infinity
\[
J_n =O\left( e^{-cn^{\frac{\beta-\gamma_{\epsilon}}{2}}} \int_{n^{\beta_{\epsilon}}}^{2n} s^{\epsilon} ds\right) =o(1), 
\]
and in particular 
\[
\sup_{n \geq 1} J_n <+\infty.
\]

\begin{remark}
Since the exponent $\epsilon$ can be chosen arbitrarily small to ensure the uniform integrability of the sequence $(\mathcal N(S_n,[0,2\pi]))$, the proof above shows that the exponent $\gamma$ can be chosen arbitrarily close to the threshold $\gamma_0:=\frac{\eta}{2(5 +7 \eta)}$. In particular, if the random coefficients have uniformly bounded moments of all orders, letting $\eta$ go to infinity, the exponent $\gamma$ can be chosen arbitrarily close to $1/14$.
\end{remark}

\section{Extension to functionals of Gaussian processes}\label{sec.gauss}
This last section is devoted to the proof of Theorem \ref{theo.main.gauss} in the framework where the coefficients of the random polynomials are functions of a stationary Gaussian process. We thus suppose here that $a_k=H(X_k)$ where $X_k$ is a weakly stationary Gaussian process with covariance function $\rho_G$ and spectral density $\psi_G$, and $H$ has a finite $(2+\eta)$ moment with respect to the standard Gaussian measure. By assumption, the spectral density is assumed to be smooth and positive on the whole period $[-\pi, \pi]$. Note in particular that the hypothesis that $\psi_G \in \mathcal C^{\infty}$ ensures that $\rho_G(|k|) = O(\frac{1}{k^D})$ for any given exponent $D>1$, so that the spectral density is the limit of its Fourier sum
\[
\psi_{G}(x) =\sum_{k\in \mathbb Z} \rho_G(|k|) e^{ikx}.
\]
Let us consider a sequence $(m_n)$ of integers going to infinity with $n$ and such that we have $m_n=O(n^{\gamma})$ for $\gamma<\gamma_{0}$ and let us introduce the truncated function 
\[
\rho_{G,n}(|k|) =\rho_G(k) \mathds{1}_{|k|\leq m_n}.
\]
Then, if we set  
\[
\psi_{{G,n}}(x) =\sum_{k\in \mathbb Z} \rho_{G,n}(|k|) e^{ikx},
\] 
we have for any fixed $D$
\begin{equation}\label{eq.approx.uni}
||\psi_{{G,n}}-\psi_{G}||_{\infty} \leq \sum_{|k|>m_n}\rho_G(k)= O\left( \frac{1}{m_n^{D-1}} \right) .
\end{equation}
In particular, since $\kappa_G:=\inf \psi_G>0$ by assumption, we deduce that for $n$ large enough, we have also
\begin{equation}\label{eq.lowerb2}
\inf\{ \psi_{G,n}(x); x \in [-\pi, \pi]\}>0.
\end{equation}
As a result, for all $(z_i)_{1\leq i \leq m_n} \in \mathbb C^{m_n}$
\[
\sum_{k,\ell=1}^{m_n} \rho_{G,n}(k-\ell) z_k \bar{z}_{\ell} =\int_{-\pi}^{\pi} \left| \sum_{k=1}^{m_n} z_k e^{i k x} \right|^2\psi_{G,n}(x)dx>0
\]
so that the truncated function $\rho_{G,n}$ is also a covariance function. 

\subsection{Total variation distance between normal vectors}
Let us denote by $\Sigma_{G}$ and $\widetilde{\Sigma}_{G}$ the covariance matrices of size $n$
\[
\Sigma_{G}(i,j) = \rho_G(|i-j|), \quad\widetilde{\Sigma}_{G}(i,j) = \rho_{G,n}(|i-j|), \quad 1\leq i,j\leq n.
\]
Following Theorem 1.1 of \cite{devroye2023total}, if $\mathrm{d}_{TV}$ denote the total variation distance, we have 
\[
\mathrm{d}_{TV} \left(\mathcal N_n\left(0, \Sigma_{G} \right), \mathcal N_n\left(0, \widetilde{\Sigma}_{G} \right)\right) \leq \frac{3}{2} \min\left( 1, \sqrt{\sum_{i=1}^n \lambda_i^2 }\right) 
\]	
where $\lambda_i$ are the eigenvalues of the symmetric matrix $\Sigma_{G}^{-1} \widetilde{\Sigma}_{G}-\mathrm{Id}$. Now if $\text{sp}$ denote the spectral radius, we have 
\[
\begin{array}{ll}
\displaystyle{\mathrm{Trace}\left((\Sigma_{G}^{-1} \widetilde{\Sigma}_{G}-\mathrm{Id})^2\right)} & \leq \displaystyle{n \times \mathrm{sp}^2\left(\Sigma_{G}^{-1} \widetilde{\Sigma}_{G}-\mathrm{Id}\right)}\\
\\
& \leq \displaystyle{n \times \mathrm{sp}^2\left(\Sigma_{G}^{-1}\right)\mathrm{sp}^2\left( \widetilde{\Sigma}_{G}-\Sigma_G\right)}\\
\\
& \leq  \displaystyle{n \times \frac{1}{\kappa_G^2} \times \sum_{|i-j|>m_n} \rho^2(|i-j|)}\\
\\
& \leq  \displaystyle{n \times \frac{1}{\kappa_G^2} \times n \sum_{k>m_n} \rho^2(|k|)}.
\end{array}
\]
For the third inequality, we used the fact that $\text{sp}(\Sigma_G^{-1}) \geq \frac{1}{\kappa_{G}}$, which is a consequence of Szeg\"o Theorem on Toeplitz matrices, see Chapter 5 of \cite{MR0094840}.  As a result, we get
\[
\mathrm{d}_{TV} \left(\mathcal N_n\left(0, \Sigma_{G} \right), \mathcal N_n\left(0, \widetilde{\Sigma}_{G} \right)\right)=O \left(  \frac{n}{m_n^{D-1/2}}\right)= O \left(  \frac{1}{n^{\gamma (D-1/2)-1}}\right).
\]
In particular, for $D$ large enough so that $\gamma(D-1/2)-1>0$, the total variation distance goes to zero as $n$ goes to infinity.

\subsection{Convergence of the process with truncated covariance}
So let us now consider independent copies $(X_k)_{k\geq 1}$ and $(Y_k)_{k\geq 1}$ of a Gaussian process with covariance $\rho_G$ and two independent copies $(\widetilde{X}_{k,n})_{1\leq k\leq n}$ and $(\widetilde{Y}_{k,n})_{1\leq k\leq n}$ of a centered Gaussian vector 
with covariance matrix $\widetilde{\Sigma}_G$. We then set, for $t\in [0,2\pi]$
\[
\begin{array}{ll}
S_n(t)& :=\displaystyle{\frac{1}{\sqrt{n}}\sum_{k=1}^n H(X_k) \cos\left(k X+\frac{kt}{n}\right)+H(Y_k) \sin\left(k X+\frac{kt}{n}\right)},\\
\\
\widetilde{S}_n(t)& :=\displaystyle{\frac{1}{\sqrt{n}}\sum_{k=1}^n H(\widetilde{X}_{k,n}) \cos\left(k X+\frac{kt}{n}\right)+H(\widetilde{Y}_{k,n}) \sin\left(k X+\frac{kt}{n}\right).}
\end{array}
\]
Recall the relation between the spectral density $\psi_{G,n}$ of the Gaussian process $(\widetilde{X}_{k,n})$ and the spectral density $\psi_{\rho, n}$ of the process $H(\widetilde{X}_{k,n})$
\[
\psi_{\rho,n}(x) = \sum_q c_q^2 q! \psi_{G,n}^{\ast q}(x),
\]
where $(c_q)$ are the Hermite coefficients of the function $H$. Combining Equations \eqref{eq.approx.uni} and \eqref{eq.lowerb2}, as $n$ goes to infinity, we have 
\[
|| \psi_{\rho,n}-\psi_{\rho}||_{\infty} =o(1), \quad \text{where}\quad \psi_{\rho}(x) := \sum_q c_q^2 q! \psi_{G}^{\ast q}(x).
\]
Indeed, by Equation \eqref{eq.approx.uni}, for any given integer $Q>0$, we have 
\[
\limsup_{n \to +\infty} \left|\left| \sum_{|q| \leq Q} c_q^2 q! \psi_{G,n}^{\ast q} - \sum_{|q| \leq Q} c_q^2 q! \psi_{G}^{\ast q}\right|\right|_{\infty}=0.
\]
Otherwise, since the spectral density $\psi_{G}$ is smooth, it is uniformly bounded on the interval $[-\pi, \pi]$ and using Equation \eqref{eq.approx.uni} again, so is $\psi_{G,n}$. In particular for $n$ large enough we have  $||\psi_{G,n}||_{\infty} \leq 2||\psi_{G}||_{\infty}$ and similarly $||\psi_{G,n}^{\ast q} ||_{\infty} \leq 2||\psi_{G}^{\ast q}||_{\infty}\leq 2||\psi_{G}^{}||_{\infty}$ for all $q\geq 1$. Therefore, for $n$ large enough, we have 
\[
\left|\left| \sum_{|q| > Q} c_q^2 q! \psi_{G,n}^{\ast q} - \sum_{|q| >Q} c_q^2 q! \psi_{G}^{\ast q}\right|\right|_{\infty} \leq 3 ||\psi_{G}||_{\infty} \sum_{|q|>Q} c_q^2 q! ,
\]
and the rest of the series goes to zero as $Q$ goes to infinity since the function $H$ is square integrable with respect to the standard Gaussian measure. 
Moreover, since $\kappa_G=\inf \psi_{G}>0$ by hypothesis, we deduce that $\kappa_{\rho}:=\inf \psi_{\rho}>0$.
In other words, the ``truncated'' process $(\widetilde{S}_n(t))_{t \in [0,2\pi]}$ satisfies the hypotheses of Theorem \ref{theo.clt}. As a result, the latter converges in the $\mathcal C^1$ topology towards an explicit limit process, say $S_{\infty}$ with a stochastic representation
\[
S_{\infty} = \sqrt{2\pi \psi_{\rho}(X)} Z,
\]
with $Z$ a Gaussian process with $\sin_c$ covariance, independent of $X$.

\subsection{Comparison of distributions}The random variables $S_n(t)$ and $\widetilde{S}_n(t)$ being measurable functions of the coefficients $(X_k,Y_k)_{k\geq 1}$ and $(\widetilde{X}_{k,n},\widetilde{Y}_{k,n})_{1\leq k\leq n}$ respectively, we have 
\begin{equation}\label{eq.dist.marg}
\mathrm{d}_{TV} \left(S_n(t), \widetilde{S}_n(t)\right) \leq \mathrm{d}_{TV} \left(\mathcal N_n\left(0, \Sigma_{G} \right), \mathcal N_n\left(0, \widetilde{\Sigma}_{G} \right)\right) = O \left(  \frac{1}{n^{\gamma(D-1/2)-1}}\right) .
\end{equation}
In the same manner, for any $d \geq 1$ and $(t_1, \ldots, t_d)$ and $(\xi_1, \ldots, \xi_d)$, any linear combinations will satisfy
\begin{equation}\label{eq.dist.combi}
\mathrm{d}_{TV} \left(\sum_{k=1}^d \xi_k S_n(t_k), \sum_{k=1}^d \xi_k \widetilde{S}_n(t_k)\right) = O \left(  \frac{1}{n^{\gamma(D-1/2)-1}}\right) .
\end{equation}
The sup norm $||S_n|_{\infty}$ being also a measurable function of the coefficients,  we even have
\begin{equation}\label{eq.dist_uni}
\mathrm{d}_{TV} \left(||S_n||_{\infty}, ||\widetilde{S}_n||_{\infty}\right) \leq \mathrm{d}_{TV} \left(\mathcal N_n\left(0, \Sigma_{G} \right), \mathcal N_n\left(0, \widetilde{\Sigma}_{G} \right)\right) = O \left(  \frac{1}{n^{\gamma(D-1/2)-1}}\right) .
\end{equation}
Combined with the tightness estimates of Section \ref{sec.tight}, Equation \eqref{eq.dist.combi} shows that the non truncated process $(S_n(t))_{t \in [0,2\pi]}$ converges in the $\mathcal C^1$ topology towards the same limit as $(\widetilde{S}_n(t))_{t \in [0,2\pi]}$. 
Again combining the Continuous Mapping Theorem and the continuity of the ``number of zeros'' functional with respect to the $\mathcal C^1$ topology, we deduce that the number of $\mathcal N(S_n, [0,2\pi])$ converges in distribution to $\mathcal N(S_{\infty}, [0,2\pi])$.
\par
\medskip
\noindent
Using Equation \eqref{eq.dist.marg} with $t=0$, in combination with Corollary \ref{cor.rate} applied to $\widetilde{S}_n(0)$, we deduce that uniformly in $s>0$ and for $n$ large enough, we have moreover 
\begin{equation}\label{eq.rate2}
 \mathbb P \otimes \mathbb P_X \left(|S_n(0)|<s  \right) \leq \frac{2s }{\pi\sqrt{\kappa_{\rho}} }\wedge 1 +O\left(n^{-\left( \frac{1+\eta}{4+6\eta}\right) \left( \frac{\eta}{2(1+\eta)}-\gamma \right)} \right)+ O \left(  \frac{1}{n^{\gamma(D-1/2)-1}}\right).
\end{equation}
Last, combining Equation \eqref{eq.dist_uni} with Corollary \ref{cor.anti:gen}, we get that for $0<\gamma<\beta<1$
\begin{equation}\label{eq.anti2:gen}
\mathbb P \otimes \mathbb P_X\left(||S_n ||_{\infty} \le \frac{1}{\lfloor n^{\beta} \rfloor !} \right) \leq e^{-cn^{\frac{\beta-\gamma}{2}}}+O \left(  \frac{1}{n^{\gamma(D-1/2)-1}}\right).
\end{equation}
\subsection{Uniform integrability and universality}
Let us complete the proof of Theorem \ref{theo.main.gauss}. We are left to prove that the sequence $(\mathcal N(S_n, [0,2\pi]))_{n \geq 1}$ is uniformly integrable and possibly bounded in $L^{1+\epsilon}(\mathbb P \otimes \mathbb P_X)$ for some small positive exponent $\epsilon$. Recall that we have chosen $\gamma<\gamma_0$ here, so that using the same notations as in Section \ref{sec.reduc}, we can choose $\epsilon$ small enough and work with $\gamma=\gamma_{\epsilon}<\gamma_0$ and the associated exponent $\beta_{\epsilon}$. We can then follow the exact same strategy as adopted in Section \ref{sec.ui} with the only differences that 
\begin{itemize}
\item via Equation \eqref{eq.rate2}, the upper bound for term $I_n$ in Section \ref{sec.using.rate.clt} is replaced by 
\[
\begin{array}{ll}
I_n  \leq & \displaystyle{O\left( \int_0^{n^{\beta_{\epsilon}}} s^{\epsilon} \left(   \frac{2(2\pi)^{\lfloor s \rfloor}R(s)}{\pi \sqrt{\kappa_{\rho}}\lfloor s \rfloor !} \wedge 1 \right) ds\right) + O\left(n^{-\left( \frac{1+\eta}{4+6\eta}\right) \left( \frac{\eta}{2(1+\eta)}-\gamma_{\epsilon} \right)} \right)}\\
\\ & + \displaystyle{O \left(  \frac{n^{\beta_{\epsilon}(1+\epsilon)}}{n^{\gamma_{\epsilon}(D-1/2)-1}}\right)},
\end{array}
\]
\item via Equation \eqref{eq.anti2:gen}, the upper bound for term $J_n$ in Section \ref{sec.using.smallball} is replaced by 
\[
J_n =O\left( e^{-cn^{\frac{\beta-\gamma_{\epsilon}}{2}}}  \int_{n^{\beta_{\epsilon}}}^{2n} s^{\epsilon} ds\right) +O\left( \frac{n^{1+\epsilon}}{n^{\gamma(D-1/2)-1}} \right).
\]
\end{itemize}
Since the covariance function $\rho_G$ is supposed to have fast decay, one can choose $D$ sufficiently large so that as $n$ goes to infinity
\[
O \left(  \frac{n^{\beta_{\epsilon}(1+\epsilon)}}{n^{\gamma_{\epsilon}(D-1/2)-1}}\right) =o(1), \quad O\left( \frac{n^{1+\epsilon}}{n^{\gamma_{\epsilon}(D-1/2)-1}} \right)=o(1).
\]
Since $\beta<1$, the second term above is the largest. Therefore, since $\epsilon$ can be chosen arbitrary small, the critical index of regularity $D_{0}$ satisfies the equation  $2=\gamma_{0}(D_{0}-1/2)$, which leads to $D_{0}=57/2+20/\eta$. 
\bibliographystyle{alpha}
\bibliography{newbib}

\begin{thebibliography}{DMR23}

\bibitem[ADP19]{MR3876743}
J\"{u}rgen Angst, Federico Dalmao, and Guillaume Poly.
\newblock On the real zeros of random trigonometric polynomials with dependent
  coefficients.
\newblock {\em Proc. Amer. Math. Soc.}, 147(1):205--214, 2019.

\bibitem[AF03]{MR2424078}
Robert~A. Adams and John J.~F. Fournier.
\newblock {\em Sobolev spaces}, volume 140 of {\em Pure and Applied Mathematics
  (Amsterdam)}.
\newblock Elsevier/Academic Press, Amsterdam, second edition, 2003.

\bibitem[AP21]{MR4350983}
J\"{u}rgen Angst and Guillaume Poly.
\newblock Variations on {S}alem-{Z}ygmund results for random trigonometric
  polynomials: application to almost sure nodal asymptotics.
\newblock {\em Electron. J. Probab.}, 26:Paper No. 156, 36, 2021.

\bibitem[APP22]{MR4491425}
J\"{u}rgen Angst, Thibault Pautrel, and Guillaume Poly.
\newblock Real zeros of random trigonometric polynomials with dependent
  coefficients.
\newblock {\em Trans. Amer. Math. Soc.}, 375(10):7209--7260, 2022.

\bibitem[BCP19]{MR3980307}
Vlad Bally, Lucia Caramellino, and Guillaume Poly.
\newblock Non universality for the variance of the number of real roots of
  random trigonometric polynomials.
\newblock {\em Probab. Theory Related Fields}, 174(3-4):887--927, 2019.

\bibitem[DMR23]{devroye2023total}
Luc Devroye, Abbas Mehrabian, and Tommy Reddad.
\newblock The total variation distance between high-dimensional gaussians with
  the same mean, 2023.

\bibitem[DNN22]{MR4452640}
Yen Do, Hoi~H. Nguyen, and Oanh Nguyen.
\newblock Random trigonometric polynomials: universality and non-universality
  of the variance for the number of real roots.
\newblock {\em Ann. Inst. Henri Poincar\'{e} Probab. Stat.}, 58(3):1460--1504,
  2022.

\bibitem[DNV18]{MR3846831}
Yen Do, Oanh Nguyen, and Van Vu.
\newblock Roots of random polynomials with coefficients of polynomial growth.
\newblock {\em Ann. Probab.}, 46(5):2407--2494, 2018.

\bibitem[Erd45]{erdos1945lemma}
Paul Erd\H{o}s.
\newblock On a lemma of {L}ittlewood and {O}fford.
\newblock 1945.

\bibitem[Fla17]{MR3718101}
Hendrik Flasche.
\newblock Expected number of real roots of random trigonometric polynomials.
\newblock {\em Stochastic Process. Appl.}, 127(12):3928--3942, 2017.

\bibitem[GS58]{MR0094840}
Ulf Grenander and Gabor Szeg\"{o}.
\newblock {\em Toeplitz forms and their applications}.
\newblock California Monographs in Mathematical Sciences. University of
  California Press, Berkeley-Los Angeles, 1958.

\bibitem[IKM16]{MR3563891}
Alexander Iksanov, Zakhar Kabluchko, and Alexander Marynych.
\newblock Local universality for real roots of random trigonometric
  polynomials.
\newblock {\em Electron. J. Probab.}, 21:Paper No. 63, 19, 2016.

\bibitem[Jan21]{janson2021central}
Svante Janson.
\newblock A central limit theorem for m-dependent variables, 2021.

\bibitem[JPR24]{MR4710537}
Svante Janson, Luca Pratelli, and Pietro Rigo.
\newblock Quantitative bounds in the central limit theorem for {$m$}-dependent
  random variables.
\newblock {\em ALEA Lat. Am. J. Probab. Math. Stat.}, 21(1):245--265, 2024.

\bibitem[Kri16]{krishnapur2016anti}
Manjunath Krishnapur.
\newblock Anti-concentration inequalities.
\newblock {\em Lecture notes}, page~1, 2016.

\bibitem[LA22]{liu2022wasserstein}
Tianle Liu and Morgane Austern.
\newblock Wasserstein-p bounds in the central limit theorem under weak
  dependence.
\newblock {\em arXiv preprint arXiv:2209.09377}, 2022.

\bibitem[LA23]{liu2023smooth}
Tianle Liu and Morgane Austern.
\newblock Smooth edgeworth expansion and wasserstein-$ p $ bounds for mixing
  random fields.
\newblock {\em arXiv preprint arXiv:2309.07031}, 2023.

\bibitem[Naz93]{MR1246419}
F.~L. Nazarov.
\newblock Local estimates for exponential polynomials and their applications to
  inequalities of the uncertainty principle type.
\newblock {\em Algebra i Analiz}, 5(4):3--66, 1993.

\bibitem[NV13]{nguyen2013small}
Hoi~H Nguyen and Van~H Vu.
\newblock Small ball probability, inverse theorems, and applications.
\newblock {\em Erd{\H{o}}s centennial}, pages 409--463, 2013.

\bibitem[NV21]{MR4340724}
Oanh Nguyen and Van Vu.
\newblock Random polynomials: central limit theorems for the real roots.
\newblock {\em Duke Math. J.}, 170(17):3745--3813, 2021.

\bibitem[RS01]{MR1908331}
Alexander Rusakov and Oleg Seleznjev.
\newblock On weak convergence of functionals on smooth random functions.
\newblock {\em Math. Commun.}, 6(2):123--134, 2001.

\end{thebibliography}
\end{document}